\newtheorem{theorem}{Theorem}[section]
\newtheorem{lemma}[theorem]{Lemma}
\newtheorem{proof of lemma}[theorem]{Proof of Lemma}
\newtheorem{proposition}[theorem]{Proposition}
\theoremstyle{definition}
\newtheorem{remark}[theorem]{Remark}
\numberwithin{equation}{section}
\begin{document}

\title[uniqueness of the Fourier transform]
{Uniqueness of the Fourier transform on certain Lie groups}

\author{A. Chattopadhyay, D.K. Giri and R.K. Srivastava}

\address{Department of Mathematics, Indian Institute of Technology, Guwahati, India 781039.}
\email{{arupchatt@iitg.ac.in, deb.giri@iitg.ac.in, rksri@iitg.ac.in}}

\subjclass[2000]{Primary 42A38; Secondary 44A35}

\date{\today}


\keywords{Convolution, Fourier transform, Hermite functions, Heisenberg group.}

\begin{abstract}
In this article, we prove that if the group Fourier transform of certain integrable
functions on the Heisenberg motion group (or step two nilpotent Lie groups) is of
finite rank, then the function is identically zero. These results can be thought
as an analogue to the Benedicks theorem that dealt with the uniqueness of the Fourier
transform of integrable functions on the Euclidean spaces.
\end{abstract}

\maketitle

\section{Introduction}\label{section1}
In an interesting article, M. Benedicks \cite{B} had extended the classical Paley-Wiener
theorem for compactly supported function to the class of integrable functions. In other
words, support of an integrable function  and its Fourier transform both cannot be of
finite measure simultaneously. Thereafter, a series of analogous results to the
Benedicks theorem has been explored in various contexts, including the Heisenberg group
and the Euclidean motion groups (see \cite{NR,PS,PSi,PT,R,SST}). In article \cite{NR},
an analogous result on the Heisenberg group has worked out for the partial compactly
supported functions in terms of finite rank of Fourier transform of the function.
Further, Vemuri \cite{V} has relaxed the compact support condition on the functions
by finite Lebesgue measure.

\smallskip

In this article, we explore analogous results to the Amrein-Berthier and
Benedicks theorem on the Heisenberg motion group and step two nilpotent Lie groups. We prove
that if the group Fourier transform of finitely supported certain integrable functions on the
Heisenberg motion group (or step two nilpotent Lie groups) is of finite rank, then the function
has to vanish identically. However, it would be a reasonable to consider the case when the
spectrum of the Fourier transform of an integrable function will be supported on a thin
uncountable set.

\section{Preliminaries on the Heisenberg motion group}\label{section2}
The Heisenberg group $\mathbb H^n=\mathbb C^n\times\mathbb R$ is a step
two nilpotent Lie group having center $\mathbb R$ that equipped with the group law
\[(z,t)\cdot(w,s)=\left(z+w,t+s+\frac{1}{2}\text{Im}(z\cdot\bar w)\right).\]
By Stone-von Neumann theorem, the infinite dimensional irreducible unitary
representations of $\mathbb H^n$ can be parameterized by $\mathbb R^\ast=\mathbb R\smallsetminus\{0\}.$
That is, each of $\lambda\in\mathbb R^\ast$ defines a Schr\"{o}dinger representation $\pi_\lambda$ of
$\mathbb H^n$ by
\[\pi_\lambda(z,t)\varphi(\xi)=e^{i\lambda t}e^{i\lambda(x\cdot\xi+\frac{1}{2}x\cdot y)}\varphi(\xi+y),\]
where $z=x+iy$ and $\varphi\in L^2(\mathbb{R}^n).$
Let \[T=\frac{\partial}{\partial t},~ X_j=\frac{\partial}{\partial
x_j}+\frac{1}{2}y_j\frac{\partial}{\partial t}~\text{and}~
Y_j=\frac{\partial}{\partial y_j}-\frac{1}{2}x_j\frac{\partial}{\partial t}.\]
Then $\{T,X_j,Y_j: j=1,\ldots,n\}$ forms a  basis for the  Lie algebra $\mathfrak h^n$
consists of all left-invariant vector fields on $\mathbb H^n$ and the representation
$\pi_\lambda$ induces a representation $\pi_\lambda^*$ of $\mathfrak h^n$ on the space
of $C^\infty$ vectors in $L^2(\mathbb R^n)$ via
\[\pi_\lambda^*(X)f=\left.\frac{d}{dt}\right\vert_{t=0}\pi_\lambda(\exp tX)f.\]

\bigskip

It is easy to see that $\pi_\lambda^*(X_j)=i\lambda x_j$ and $\pi_\lambda^*(Y_j)=\frac{\partial}{\partial x_j}.$
Hence for the sub-Laplacian $\mathcal L=-\sum_{j=1}^n(X_j^2+Y_j^2),$
it follows that $\pi_\lambda^*(\mathcal L)=-\Delta_x+\lambda^2|x|^2=:H_\lambda,$
the scaled Hermite operator. Let $\phi_\alpha^\lambda(x)=|\lambda|^{\frac{n}{4}}\phi_\alpha(\sqrt{|\lambda|}x);~\alpha\in\mathbb Z_+^n,$
where $\phi_\alpha$ are the Hermite functions on $\mathbb R^n.$
Then $ \phi_\alpha^\lambda$'s are the eigenfunctions of $H_\lambda$ with eigenvalue
$(2|\alpha|+n)|\lambda|.$ Hence the entry functions $E_{\alpha\beta}^\lambda$'s
of the representation $\pi_\lambda$ are eigenfunctions of the sub-Laplacian $\mathcal L$
satisfying \[\mathcal L E_{\alpha\beta}^\lambda=(2|\alpha|+n)|\lambda|E_{\alpha\beta}^\lambda,\]
where $E_{\alpha\beta}^\lambda(z,t)=\left\langle\pi_\lambda(z,t)\phi_\alpha^\lambda,\phi_\beta^\lambda\right\rangle.$
Since $E_{\alpha\beta}^\lambda(z,t)=e^{i\lambda t}\left\langle\pi_\lambda(z)\phi_\alpha^\lambda,\phi_\beta^\lambda\right\rangle,$
the eigenfunctions $E_{\alpha\beta}^\lambda$'s are not in $L^2(\mathbb H^n).$
However, for a fix $t,$ they are in $L^2(\mathbb C^n).$ Now, define an operator
$L_\lambda$ by $\mathcal L\left(e^{i\lambda t}f(z)\right)=e^{i\lambda t}L_\lambda f(z).$
Then the special Hermite functions
\[\phi_{\alpha\beta}^\lambda(z)=(2\pi)^{-\frac{n}{2}}\left\langle\pi_\lambda(z)\phi_\alpha^\lambda,\phi_\beta^\lambda\right\rangle\]
are eigenfunctions of $L_\lambda$ with eigenvalue $2|\alpha|+n.$ We summarize by
noting that the special Hermite functions $\phi^\lambda_{\alpha\beta}$'s forms
an orthonormal basis for $L^2(\mathbb C^n)$ (see \cite{T2}, Theorem 2.3.1).

\bigskip

Heisenberg motion group $G$ is the group of isometries of $\mathbb H^n$ that
leaves invariant the sub-Laplacian $\mathcal L.$ Since the action of the unitary
group $K=U(n)$ defines a group of automorphism on $\mathbb H^n$ via $k\cdot(z,t)=(kz,t),$
where $k\in K,$ the group $G$ can be  expressed as the semidirect product of
$\mathbb H^n$ and $K.$ Hence the group law on $G$ can be understood by
\[(k_1, z, t)\cdot(k_2, w, s)=\left(k_1k_2, z+k_1w, t+s-\frac{1}{2}\text{Im}(k_1w\cdot\bar z)\right).\]
Since a right $K$-invariant function on $G$ can be thought as a function
on $\mathbb H^n,$  we infer that the Haar measure on $G$ can be written as
$dg=dkdzdt,$ where $dk$ and $dzdt$ are the normalized Haar measure on $K$
and $\mathbb H^n$ respectively.

\smallskip

For $k\in K,$ define another set of representations of the Heisenberg group
$\mathbb H^n$ by $\pi_{\lambda,k}(z,t)=\pi_\lambda(kz,t).$ Since $\pi_{\lambda,k}$
agrees with $\pi_\lambda$ on the center of $\mathbb H^n,$  it follows by
the Stone-Von Neumann theorem for the Schr\"{o}dinger representation that
$\pi_{\lambda,k}$ is equivalent to $\pi_\lambda.$ Hence there exists an
intertwining operator $\mu_\lambda(k)$ satisfying
\begin{equation}\label{exp05}
\pi_\lambda(kz,t)=\mu_\lambda(k)\pi_\lambda(z,t)\mu_\lambda(k)^\ast.
\end{equation}
The operator-valued function $\mu_\lambda$ can be thought as a unitary
representation of the group $K$ on $L^2(\mathbb R^n)$ and it is known as
metaplectic representation. Since for $\lambda\in\mathbb R^\ast,$ the set
$\{\phi_\alpha^\lambda : \alpha\in\mathbb N^n \}$ forms an orthonormal basis
for $L^2(\mathbb R^n),$ let $P_m=\{\phi_\alpha^\lambda:~ |\alpha|=m\}.$
Then $\mu_\lambda\vert_{P_m}$ is an irreducible representation of $K$ and
the action of $\mu_\lambda$ on $L^2(\mathbb R^n)$ can be realized by
\begin{equation}\label{exp09}
\mu_\lambda(k)\phi_\alpha^\lambda=\sum_{|\gamma|=|\alpha|}\eta_{\alpha\gamma}^\lambda(k)\phi_\gamma^\lambda.
\end{equation}
For more details about the metaplectic representations and the spherical
functions on $\mathbb H^n,$ we refer the article by Benson et al. \cite{BJR}.
Let $(\sigma,\mathcal H_\sigma)$ be an irreducible unitary representation of $K$
and $\mathcal H_\sigma=\text{span}\{e_j^\sigma:1\leq j\leq d_\sigma\}.$
For $k\in K,$ the matrix coefficients of the representation $\sigma\in\hat K,$
are define \[\varphi_{ij}^\sigma(k)=\langle\sigma(k)e_j^\sigma, e_i^\sigma\rangle.\]

Define a bilinear form $\phi_{\alpha}^\lambda\otimes e_i^\sigma$ on $L^2(\mathbb R^n)\times\mathcal H_\sigma$
by $\phi_{\alpha}^\lambda\otimes e_i^\sigma=\phi_{\alpha}^\lambda~e_i^\sigma.$
Then the set $\{\phi_{\alpha}^\lambda\otimes e_i^\sigma:1\leq i\leq d_\sigma, \alpha\in\mathbb N^n\}$
forms an orthonormal basis for $L^2(\mathbb R^n)\otimes\mathcal{H}_\sigma.$
Denote  $\mathcal H_\sigma^2=L^2(\mathbb R^n)\otimes\mathcal{H}_\sigma.$
\smallskip

For $\lambda\neq0,$ we define a representation $\rho_\sigma^\lambda$ of $G$
on the space $\mathcal H_\sigma^2$ by
\[\rho_\sigma^\lambda(z,t,k)=\pi_\lambda(z,t)\mu_\lambda(k)\otimes\sigma(k).\]
In the article \cite{S}, it has been shown that $\rho_\sigma^\lambda$ are the only
irreducible unitary representations of $G$ which appears in the Plancherel formula.
Thus, in view of the above argument, we denote the partial dual of the group $G$ by
$ G'\cong\mathbb R^\ast\times\hat K.$

\smallskip

Now, we define the Fourier transform of the function $f\in L^1(G)$ by
\[\hat f(\lambda,\sigma) =\int_K\int_\mathbb{R}\int_{\mathbb{C}^n} f(z,t,k)\rho_\sigma^\lambda(z,t,k)dzdtdk.\]
Let $f^\lambda$ be the inverse Fourier transform of the function $f$ in $t$ variable.
Then \[f^\lambda(z,k)=\int_\mathbb{R}f(z,t,k)e^{i\lambda t}dt.\]
Thus,
\[\hat f(\lambda,\sigma)=\int_K\int_{\mathbb{C}^n} f^\lambda(z,k)\rho_\sigma^\lambda(z,k)dzdk,\]
where $\rho_\sigma^\lambda(z,k)=\rho_\sigma^\lambda(z,0,k).$
For $f\in L^1\cap L^2(G),$ the following Plancherel formula derived in \cite{S}.
\[\int_K\int_{\mathbb{H}^n} f(z,t,k)dzdtdk=(2\pi)^{-n}\sum_{\sigma\in\hat K}
\int_{\mathbb{R}\setminus\{0\}}\|\hat f(\lambda,\sigma)\|^2_{HS}|\lambda|^2d\lambda.\]
Further, the set $\{\phi_{\alpha}^\lambda\otimes e_i^\sigma : \alpha\in\mathbb N^n,~1\leq i\leq d_\sigma\}$
forms an orthonormal basis for $\mathcal{H}_\sigma^2,$ we can write
\[\hat f(\lambda,\sigma)(\phi_{\gamma}^\lambda\otimes e_i^\sigma)=\sum_{|\alpha|=|\gamma|}\int_K\eta_{\alpha\gamma}^\lambda(k)
\int_{\mathbb{C}^n} f^\lambda(z,k)\left(\pi_\lambda(z)\phi_{\alpha}^\lambda\otimes\sigma(k)e_i^\sigma\right)dzdk.\]

\section{Uniqueness results on the Heisenberg motion group}\label{section3}
In this section, we work out some of the results pertaining to the uniqueness
of the Fourier transform on the Heisenberg motion group $G=\mathbb{H}^n\ltimes K.$
Those results can be thought as an analogue to the Benedicks theorem.

\bigskip

\noindent \textbf{Weyl transform.}
For proving the main result of this section, we need to derive some of the properties
of the Weyl type transform on $G^\times=\mathbb C^n\times K.$ For more details
on the Wely transform on the Heisenberg group, see \cite{T}.

\smallskip

For $(\lambda, \sigma)\in G',$ we define the Weyl transform $W_\sigma^\lambda$ on
$L^1(G^\times)$ by
\[W_\sigma^\lambda(F)=\int_{K}\int_{\mathbb{C}^n}F(z,k)\rho_\sigma^\lambda(z,k)dzdk.\]
Now, we define the $\lambda$-twisted convolutions of $F,H\in L^1\cap L^2(G^\times)$ by
\[F\times_\lambda H(g)=\int_{G^\times}F\left(g{g'}^{-1}\right)H(g')e^{-\frac{i}{2}\lambda\text{Im}(kw.\bar z)}dg',\]
where $g=(z,k)$ and $g'=(w,s).$ For $\lambda=1,$ we simply call the $\lambda$-twisted
convolutions as twisted convolutions and denote it by $F\times H.$ We derive the
following properties of the Weyl transform $W_\sigma^\lambda.$

\begin{proposition}\label{prop2}
If $F, H\in L^1\cap L^2(G^\times),$ then\\
$(i)~W_\sigma^\lambda(F^\ast)=W_\sigma^\lambda(F)^\ast,$ where $F^\ast(z,k)=\overline{F\left((z,k)^{-1}\right)},$\\
$(ii)~W_\sigma^\lambda(F\times_\lambda H)=W_\sigma^\lambda(F)W_\sigma^\lambda(H).$
\end{proposition}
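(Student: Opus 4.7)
The plan is to treat both parts in parallel by first establishing how the representation $\rho_\sigma^\lambda$ behaves under multiplication in $G^\times$. Using the intertwining identity $\mu_\lambda(k)\pi_\lambda(z,t)=\pi_\lambda(kz,t)\mu_\lambda(k)$ from (\ref{exp05}) together with the Heisenberg product rule $\pi_\lambda(z,0)\pi_\lambda(k_1w,0)=e^{\frac{i\lambda}{2}\text{Im}(z\cdot\overline{k_1w})}\pi_\lambda(z+k_1w,0)$, I would derive the cocycle relation
\[
\rho_\sigma^\lambda(z_1,k_1)\,\rho_\sigma^\lambda(w,s)=e^{-\frac{i\lambda}{2}\text{Im}(k_1 w\cdot\bar z_1)}\,\rho_\sigma^\lambda\bigl((z_1,k_1)(w,s)\bigr).
\]
This identity is the engine behind both claims, since it exhibits $\rho_\sigma^\lambda$ as a projective representation of $G^\times$ with cocycle matching the phase built into the twisted convolution $\times_\lambda$.

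For (i), the plan is first to specialise the cocycle above to $g_2=g_1^{-1}$; in $G^\times$ one has $(z,k)^{-1}=(-k^{-1}z,k^{-1})$, and combined with the unitarity of $\rho_\sigma^\lambda$ this yields $\rho_\sigma^\lambda((z,k)^{-1})=\rho_\sigma^\lambda(z,k)^{\ast}$. Plugging the definition $F^\ast(z,k)=\overline{F((z,k)^{-1})}$ into $W_\sigma^\lambda(F^\ast)$ and substituting $(z,k)\mapsto(z,k)^{-1}$—whose Jacobian is one since $G^\times$ is unimodular, the $U(n)$-action preserving Lebesgue measure on $\mathbb{C}^n$—converts the integral to $\int_K\int_{\mathbb{C}^n}\overline{F(z,k)}\,\rho_\sigma^\lambda(z,k)^\ast\,dz\,dk$, which is exactly $W_\sigma^\lambda(F)^\ast$.

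For (ii), I would expand $W_\sigma^\lambda(F\times_\lambda H)$ by Fubini into a double integral, then make the right-translation change of variable $g\mapsto g_1 g'$, i.e., put $g_1=g{g'}^{-1}$. The factor $\rho_\sigma^\lambda(g)=\rho_\sigma^\lambda(g_1 g')$ is then converted via the cocycle identity into $\rho_\sigma^\lambda(g_1)\rho_\sigma^\lambda(g')$ times an explicit phase, and the resulting double integral factors as $W_\sigma^\lambda(F)\,W_\sigma^\lambda(H)$ provided this cocycle phase exactly cancels the phase $e^{-\frac{i\lambda}{2}\text{Im}(kw\cdot\bar z)}$ built into the definition of $\times_\lambda$—this is precisely the design principle of that definition.

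The real work lies in the phase cancellation in (ii). After the substitution, the old variables become $z=z_1+k_1w$ and $k=k_1 s$, so one must trace the exponent $\text{Im}(kw\cdot\bar z)$ through these substitutions, simplify using the unitarity identities $\text{Im}(v\cdot\bar v)=0$ and $k_1 u\cdot\overline{k_1 u'}=u\cdot\bar{u'}$ for $k_1\in U(n)$, and match it against the cocycle exponent $\text{Im}(k_1 w\cdot\bar z_1)$. This bookkeeping is the only piece of nontrivial algebra; everything else is a formal consequence of the cocycle identity and the unimodularity of $G^\times$.
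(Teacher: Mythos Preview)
Your proposal is correct and follows essentially the same route as the paper: both arguments hinge on the projective (cocycle) identity $\rho_\sigma^\lambda(g)\rho_\sigma^\lambda(g')=e^{-\frac{i\lambda}{2}\text{Im}(kw\cdot\bar z)}\rho_\sigma^\lambda(gg')$ together with a unimodular change of variables, with (i) coming from the special case $g'=g^{-1}$ and unitarity, and (ii) from matching the cocycle phase to the twist in $\times_\lambda$. The paper simply runs (ii) in the opposite direction (starting from $W_\sigma(F)W_\sigma(H)$ and arriving at $W_\sigma(F\times H)$) and leaves the cocycle identity implicit rather than isolating it as you do.
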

\begin{proof}
By the scaling argument, it is enough to prove these results for the case $\lambda=1.$
\smallskip

$(i)$ If $\phi,\psi\in\mathcal{H}_\sigma^2,$ then we have
\begin{eqnarray*}
\left\langle W_\sigma(F^\ast)\phi,\psi\right\rangle&=&\int_{K}\int_{\mathbb C^n}F^\ast(z,k)\left\langle\rho_\sigma(z,k)\phi,\psi\right\rangle dzdk\\
&=&\int_K\int_{\mathbb C^n}\left\langle\phi,F\left((z,k)^{-1}\right)\rho_\sigma\left((z,k)^{-1}\right)\psi\right\rangle dzdk\\
&=&\left\langle\phi,W_\sigma(F)\psi\right\rangle=\left\langle W_\sigma(F)^\ast\phi,\psi\right\rangle.
\end{eqnarray*}

$(ii)$ Let $dg=dzdk,$ then
\begin{eqnarray*}
\left\langle W_\sigma(F)W_\sigma(H)\phi,\psi\right\rangle&=&\int_{G^\times}F(z,k)\left\langle\rho_\sigma(z,k)W_\sigma(H)\phi,\psi\right\rangle dg\\
&=&\int_{G^\times}\int_{G^\times}F(g)H(g')e^{-\frac{i}{2}\text{Im}(kw.\bar z)}\left\langle\rho_\sigma(z+kw,ks)\phi,\psi\right\rangle dg'dg\\
&=&\int_{G^\times}\int_{G^\times}F\left(g{g'}^{-1}\right)H(g')e^{-\frac{i}{2}\text{Im}(kw.\bar z)}\left\langle\rho_\sigma(g)\phi,\psi\right\rangle dg'dg\\
&=&\int_{G^\times}(F\times H)(z,k)\left\langle\rho_\sigma(z,k)\phi, \psi\right\rangle dg\\
&=&\left\langle W_\sigma(F\times H)\phi, \psi\right\rangle.
\end{eqnarray*}
\end{proof}
Next, we derive the Plancherel formula for the Weyl transform $W_\sigma^\lambda$
on $L^2(G^\times)$ corresponding to $\lambda=1.$

\begin{proposition}\label{prop05}
If $F\in L^2(G^\times),$ then the following holds.
\[\sum_{\sigma\in\hat K}d_\sigma\left|\left|W_\sigma(F)\right|\right|_{HS}^2=(2\pi)^n\int_{K}\int_{\mathbb{C}^n}|F(z,k)|^2dzdk.\]
\end{proposition}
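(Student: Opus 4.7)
The plan is to establish the identity by a direct computation in an orthonormal basis, reducing it to three orthogonality relations already present in the setup: the Peter--Weyl theorem on $K$, unitarity of the metaplectic representation $\mu$, and Parseval for the special Hermite basis of $L^2(\mathbb{C}^n)$. As in Proposition \ref{prop2}, the scaling argument lets one restrict attention to $\lambda=1$, so the task is precisely the stated identity.

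The computation begins with the expansion $\|W_\sigma(F)\|_{HS}^2=\sum_{\alpha,\beta,i,j}|A_{\alpha\beta,ij}^\sigma|^2$ in the orthonormal basis $\{\phi_\alpha\otimes e_i^\sigma\}$ of $\mathcal{H}_\sigma^2$, where $A_{\alpha\beta,ij}^\sigma=\int_K\int_{\mathbb{C}^n}F(z,k)\langle\pi(z)\mu(k)\phi_\alpha,\phi_\beta\rangle\,\phi_{ji}^\sigma(k)\,dz\,dk$. Substituting the expansion (\ref{exp09}) for $\mu(k)\phi_\alpha$ together with the special Hermite identity $\langle\pi(z)\phi_\gamma,\phi_\beta\rangle=(2\pi)^{n/2}\phi_{\gamma\beta}(z)$ rewrites $A^\sigma_{\alpha\beta,ij}$ as $(2\pi)^{n/2}$ times a finite sum over $|\gamma|=|\alpha|$ of integrals of $F(z,k)\phi_{\gamma\beta}(z)\eta_{\alpha\gamma}(k)\phi_{ji}^\sigma(k)$ over $K\times\mathbb{C}^n$.

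Next I would compute $\sum_\sigma d_\sigma\|W_\sigma(F)\|_{HS}^2$ by writing $|A^\sigma_{\alpha\beta,ij}|^2$ as a product with its conjugate in dummy variables $(w,k')$, interchanging sums and integrals, and applying the Peter--Weyl completeness relation $\sum_\sigma d_\sigma\sum_{i,j}\phi_{ji}^\sigma(k)\overline{\phi_{ji}^\sigma(k')}=\delta_K(k^{-1}k')$ to collapse the $k'$-integration. Unitarity of $\mu(k)$ restricted to the finite-dimensional Hermite eigenspace, in the form $\sum_{|\alpha|=|\gamma|}\eta_{\alpha\gamma}(k)\overline{\eta_{\alpha\gamma'}(k)}=\delta_{\gamma\gamma'}$, then kills the $\alpha$-sum and forces $\gamma=\gamma'$. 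What remains is $(2\pi)^n\int_K\sum_{\gamma,\beta}\abs{\int_{\mathbb{C}^n}F(z,k)\phi_{\gamma\beta}(z)\,dz}^2 dk$, and Parseval in the orthonormal basis $\{\overline{\phi_{\gamma\beta}}\}$ of $L^2(\mathbb{C}^n)$ identifies the inner sum with $\int_{\mathbb{C}^n}|F(z,k)|^2\,dz$, producing the stated identity.

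The main obstacle is a clean justification of the sum--integral interchanges and of the distributional Peter--Weyl identity. The natural remedy is to prove the formula first on the dense subspace of finite $K$-Fourier sums $F(z,k)=\sum_{\sigma\in S}\sum_{i,j}a_{ij}^\sigma(z)\phi_{ij}^\sigma(k)$, with $S\subset\hat K$ finite and each $a_{ij}^\sigma$ Schwartz on $\mathbb{C}^n$; here all pertinent sums truncate and Peter--Weyl reduces to the Schur orthogonality relations. The general case $F\in L^1\cap L^2(G^\times)$ then follows by density, using the operator-norm bound $\|W_\sigma(F)\|_{\mathrm{op}}\le\|F\|_{L^1(G^\times)}$ to propagate the identity.
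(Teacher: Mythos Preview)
Your proposal is correct and follows essentially the same route as the paper: expand $\|W_\sigma(F)\|_{HS}^2$ in the basis $\{\phi_\alpha\otimes e_i^\sigma\}$, then use Peter--Weyl on $K$, unitarity of $\mu(k)$ on each Hermite eigenspace, and Parseval for the special Hermite functions on $\mathbb{C}^n$. The only difference is cosmetic: rather than invoking the distributional completeness relation and then patching things up by density, the paper applies the Plancherel theorem on $K$ directly to the function $k\mapsto\sum_{|\alpha|=|\gamma|}\eta_{\alpha\gamma}(k)\int_{\mathbb{C}^n}F(z,k)\phi_{\alpha\beta}(z)\,dz$, which makes the sum--integral interchanges immediate and the density argument unnecessary.
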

\begin{proof}
Since $L^1\cap L^2(G^\times)$ is dense in $L^2(G^\times),$ it is enough to prove the
result for $L^1\cap L^2(G^\times).$ For the sake of convenience, let
$\phi_{\alpha, i}^{\sigma}=\phi_{\alpha}^\lambda\otimes e_i^\sigma$
and $\phi_{\alpha\beta}=(2\pi)^{\frac{n}{2}}\phi_{\alpha\beta}^\lambda$ when $\lambda=1.$
Then the set $\{\phi_{\gamma,i}^\sigma :\gamma\in\mathbb N^n, 1\leq i\leq d_\sigma\}$
forms an orthonormal basis for $\mathcal{H}_\sigma^2.$  By the Parseval identity,
we have
\[\left\|W_\sigma(F)\phi_{\gamma,i}^\sigma\right\|_{\mathcal{H}_\sigma^2}^2=\sum_{\beta\in\mathbb N^n}\sum_{j=1}^{d_\sigma}\left|\left\langle W_\sigma(F)\phi_{\gamma,i}^\sigma,\phi_{\beta,j}^\sigma\right\rangle\right|^2=\]
 \[(2\pi)^n\sum_{\beta\in\mathbb N^n}\sum_{j=1}^{d_\sigma}\left|\sum_{|\alpha|=|\gamma|}\int_{K}\eta_{\alpha\gamma}(k)\int_{\mathbb{C}^n}F(z,k)\phi_{\alpha\beta}(z)\varphi_{ji}^\sigma(k)dzdk\right|^2.
\]
It is easy to see that the matrix coefficients $\eta_{\alpha\gamma}$ of the representation
$\mu_\lambda$ satisfy the identity
\begin{eqnarray}\label{exp33}
\sum_{|\alpha|=m}\left|\sum_{|\gamma|=m}c_\alpha\eta_{\alpha\gamma}(k)\right|^2=\sum_{|\alpha|=m}\left|c_\alpha\eta_{\alpha\gamma}(k)\right|^2,
\end{eqnarray}
where $k\in K$ and $c_\alpha\in\mathbb C.$ Now, by Plancherel theorem for the
compact group $K$ and the identity (\ref{exp33}), we infer that
\begin{eqnarray*}
\sum_{\sigma\in\hat K}d_\sigma\left|\left|W_\sigma(F)\right|\right|_{HS}^2 &=& (2\pi)^n\sum_{\beta, \gamma\in\mathbb N^n}\int_{K}\left|\sum_{|\alpha|=|\gamma|}\eta_{\alpha\gamma}(k)\int_{\mathbb{C}^n}F(z,k)\phi_{\alpha\beta}(z)dz\right|^2dk\\
&=& (2\pi)^n\sum_{\alpha,\beta\in\mathbb N^n}\int_{K}\left|\int_{\mathbb{C}^n}F(z,k)\phi_{\alpha\beta}(z)dz\right|^2dk\\
&=& (2\pi)^n\int_{K}\int_{\mathbb{C}^n}|F(z,k)|^2dzdk.
\end{eqnarray*}
\end{proof}

For $\sigma\in\hat K,$ we defining a Fourier-Wigner type transform $V_f^g$ of
functions $f,g\in \mathcal{H}_\sigma^2$ on $G^\times$ by
\[V_f^g(z,k)=\left\langle\rho_\sigma(z,k)f,g\right\rangle.\]

\begin{lemma}\label{lemma1}
For $f_l, g_l\in \mathcal{H}_\sigma^2,~l=1,2,$ the following identity holds.
\[\int_{K}\int_{\mathbb{C}^n}V_{f_1}^{g_1}(z,k)\overline{V_{f_2}^{g_2}(z,k)}dzdk=
(2\pi)^n\left\langle f_1,f_2 \right\rangle\overline{\left\langle g_1,g_2 \right\rangle}.\]
\end{lemma}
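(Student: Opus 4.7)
The plan is to interpret this claim as an $L^2$-orthogonality relation for matrix coefficients of $\rho_\sigma$ on $G^\times$ and to exploit the tensor-product structure $\rho_\sigma(z,k)=\pi_1(z)\mu_1(k)\otimes\sigma(k)$ to decouple the integrals over $\mathbb{C}^n$ and $K$. The integral over $\mathbb{C}^n$ will be controlled by the Moyal/Fourier--Wigner identity for the Schr\"odinger representation $\pi_1$, which is nothing more than the orthonormality of the special Hermite basis $\{\phi_{\alpha\beta}^1\}$ in $L^2(\mathbb{C}^n)$; the integral over $K$ will be controlled by the Schur orthogonality for the matrix coefficients of the irreducible representation $\sigma$.

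I would first expand $f_l=\sum_{\alpha,i}a^{(l)}_{\alpha,i}\,\phi_\alpha^1\otimes e_i^\sigma$ and $g_l=\sum_{\beta,j}b^{(l)}_{\beta,j}\,\phi_\beta^1\otimes e_j^\sigma$ in the orthonormal basis of $\mathcal H_\sigma^2$. Combining the definition of $\rho_\sigma$ with the expansion (\ref{exp09}) of $\mu_1(k)\phi_\alpha^1$ and the identity $\langle\pi_1(z)\phi_\gamma^1,\phi_\beta^1\rangle=(2\pi)^{n/2}\phi_{\gamma\beta}^1(z)$, each matrix coefficient $V_{f_l}^{g_l}(z,k)$ turns into a finite sum of products of the form $\phi_{\gamma\beta}^1(z)\,\eta_{\alpha\gamma}^1(k)\,\phi_{ji}^\sigma(k)$, weighted by the coefficients $a^{(l)}_{\alpha,i}\,\overline{b^{(l)}_{\beta,j}}$. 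Forming the product $V_{f_1}^{g_1}\,\overline{V_{f_2}^{g_2}}$ and integrating in $z$ first, the orthonormality of $\{\phi_{\gamma\beta}^1\}$ collapses the pair of indices coming from $V_{f_2}^{g_2}$ onto those of $V_{f_1}^{g_1}$, yielding Kronecker deltas in the $\gamma$ and $\beta$ labels as well as the overall constant $(2\pi)^n$.

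The remaining $k$-integral then reduces to two basic orthogonalities. Pointwise unitarity of $\mu_1(k)$ restricted to each Hermite eigenspace supplies $\sum_{|\gamma|=|\alpha|}\eta_{\alpha\gamma}^1(k)\,\overline{\eta_{\alpha'\gamma}^1(k)}=\delta_{\alpha\alpha'}$ at every $k$, and Schur orthogonality on $K$ handles the products $\phi_{ji}^\sigma(k)\,\overline{\phi_{j'i'}^\sigma(k)}$. Reassembling the surviving diagonal sums produces the desired factorization $\langle f_1,f_2\rangle\,\overline{\langle g_1,g_2\rangle}$. The main obstacle I anticipate is purely notational --- keeping track of the many indices generated by (\ref{exp09}) and verifying that the sums factor cleanly once the two orthogonalities are applied in the right order; once that bookkeeping is in place, no further ingredients are required.
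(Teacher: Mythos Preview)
Your plan is essentially the paper's own argument: expand $f_l,g_l$ in the tensor basis $\{\phi_\alpha^1\otimes e_i^\sigma\}$, use the orthonormality of the special Hermite functions for the $\mathbb C^n$-integral, collapse the $\eta$-sums via the pointwise unitarity of $\mu_1(k)$ on each Hermite eigenspace (this is exactly how the paper deploys identity~(\ref{exp33})), and then handle the remaining product of $\sigma$-matrix coefficients by the $K$-integration. One caution on that last step: Schur orthogonality for the irreducible representation $\sigma$ contributes a factor $d_\sigma^{-1}$ that is absent from the identity as stated, so when you carry out the bookkeeping be prepared to find the constant off by exactly $d_\sigma$; the paper's own proof glosses over this same point.
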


\begin{proof}
Since the set $\{\phi_\alpha\otimes e_i^\sigma :\alpha\in\mathbb N^n, 1\leq i\leq d_\sigma\}$
form an orthonormal basis for $\mathcal{H}_\sigma^2$ and $f_l, g_l\in \mathcal{H}_\sigma^2,$
we can write
\[f_l=\sum_{\gamma\in\mathbb{N}^n}\sum_{1\leq i\leq d_\sigma}f_{\gamma,i}^l\phi_\gamma\otimes e_i^\sigma,
\text{ and } g_l=\sum_{\beta\in\mathbb{N}^n}\sum_{1\leq j\leq d_\sigma}g_{\beta,j}^l\phi_\beta\otimes e_j^\sigma, ~l=1,2,\]
where $f_{\gamma,i}^l$ and $g_{\beta,j}^l$ are constants.
Thus,
\[V_{f_l}^{g_l}(z,k)=(2\pi)^{\frac{n}{2}}\sum_{\alpha,\beta\in\mathbb N^n}\sum_{1\leq i,j\leq d_\sigma}\sum_{|\gamma|=|\alpha|}
f_{\gamma,i}^l\overline{g_{\beta,j}^l}\eta_{\alpha\gamma}(k)
\phi_{\alpha\beta}(z)\varphi_{ji}^\sigma(k),\]
By the
orthogonality of the special Hermite functions $\phi_{\alpha\beta}$ together with the
identity (\ref{exp33}), it follows that
\[\int_{\mathbb{C}^n}V_{f_1}^{g_1}(z,k)\overline{V_{f_2}^{g_2}(z,k)}dz=\]
\[(2\pi)^n\sum_{\gamma,\beta\in\mathbb N^n}\left[\sum_{i,j=1}^{d_\sigma}\left(f_{\gamma,i}^1\overline{g_{\beta,j}^1}\right)\phi_{ji}^\sigma(k)
\sum_{i,j=1}^{d_\sigma}\left(\overline{f_{\gamma,i}^2}g_{\beta,j}^2\right)\overline{\phi_{ji}^\sigma(k)}\right].\]
Finally, by integrating both the sides with respect to $k,$ we get

\[\int_{K}\int_{\mathbb{C}^n}V_{f_1}^{g_1}(z,k)\overline{V_{f_2}^{g_2}(z,k)}dzdk=\]
\[(2\pi)^n\left(\sum_{\gamma\in\mathbb N^n}\sum_{1\leq i\leq d_\sigma}f_{\gamma,i}^1\overline{f_{\gamma,i}^2}\right)\left(\sum_{\beta\in\mathbb N^n}
\sum_{1\leq j\leq d_\sigma}g_{\beta,j}^2\overline{g_{\beta,j}^1}\right)=
(2\pi)^n\left\langle f_1,f_2 \right\rangle\overline{\left\langle g_1,g_2 \right\rangle}.\]

\end{proof}

Notice that, if $f,g\in \mathcal{H}_\sigma^2,$ then as particular case of Lemma \ref{lemma1},
it follows that $V_f^g\in L^2(G^\times).$ Let $V_\sigma=\overline{\text{span}}\left\{V_f^g: f,g\in \mathcal{H}_\sigma^2\right\}.$
Since the set \[B_\sigma=\left\{\psi_{\alpha,i}^\sigma=\phi_\alpha\otimes e_i^\sigma :\alpha\in\mathbb N^n, 1\leq i \leq d_\sigma\right\}\]
form an orthonormal basis for $\mathcal{H}_\sigma^2,$ by Lemma \ref{lemma1},
we infer that the set
\[V_{B_\sigma}=\left\{V_{\psi_{\alpha,i}^\sigma}^{\psi_{\beta,j}^\sigma}:~
\psi_{\alpha,i}^\sigma,\psi_{\beta,j}^\sigma\in B_\sigma\right\}\]
is an orthonormal basis for $V_\sigma.$ Next, we recall the
Peter-Weyl theorem which is crucial for the proof of Proposition \ref{prop5}.
For more details, see \cite{Su}.

\begin{theorem}\label{th10} \em{(Peter-Weyl).} Let $\hat K$ be the unitary
dual of the compact Lie group $K.$ Then the set $\left\{\sqrt{d_\sigma}\phi_{ij}^\sigma:1\leq i,j\leq d_\sigma, \sigma\in\hat K\right\}$
is an orthonormal basis for the space $L^2(K).$
\end{theorem}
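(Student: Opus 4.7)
The plan is to prove the Peter-Weyl theorem in two stages: first establish the orthogonality of the system $\{\sqrt{d_\sigma}\,\phi_{ij}^\sigma\}$, and then establish completeness in $L^2(K)$. Orthogonality follows from the Schur orthogonality relations, which I would derive as follows. Given irreducible unitary representations $\sigma,\tau\in\hat K$ acting on $\mathcal H_\sigma,\mathcal H_\tau$, and any linear map $A:\mathcal H_\tau\to\mathcal H_\sigma$, the averaged operator $\bar A=\int_K\sigma(k)A\tau(k)^{-1}\,dk$ intertwines $\tau$ and $\sigma$. By Schur's lemma, $\bar A=0$ if $\sigma\not\cong\tau$ and $\bar A=\tfrac{\operatorname{tr} A}{d_\sigma}I$ if $\sigma=\tau$. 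Choosing $A$ to be the rank-one operator $e\mapsto\langle e,e_l^\tau\rangle e_i^\sigma$ and pairing with $e_j^\sigma$ yields
\[
\int_K\phi_{ij}^\sigma(k)\overline{\phi_{kl}^\tau(k)}\,dk=\frac{1}{d_\sigma}\,\delta_{\sigma\tau}\delta_{ik}\delta_{jl},
\]
which gives both the orthogonality and the correct normalizing constant $\sqrt{d_\sigma}$.

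For completeness I would use the classical compact-operator argument. For $f\in C(K)$ symmetric in the sense $f(k)=\overline{f(k^{-1})}$, the convolution operator $T_f g=f*g$ on $L^2(K)$ is Hilbert-Schmidt (since $K$ is compact) and self-adjoint. By the spectral theorem, $L^2(K)=\bigoplus E_\mu$ where each nonzero eigenspace $E_\mu$ of $T_f$ is finite-dimensional. Because $T_f$ commutes with right translation $R_{k_0}g(k)=g(kk_0)$, each $E_\mu$ is $R$-invariant and thus decomposes as a direct sum of finite-dimensional irreducible representations of $K$; the matrix coefficients of these irreducibles lie in the closed span $\mathcal M$ of $\{\phi_{ij}^\sigma\}$.

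It remains to see that $\mathcal M=L^2(K)$. Suppose $h\in L^2(K)$ is orthogonal to $\mathcal M$. Take an approximate identity $\{f_n\}$ in $L^1(K)$ consisting of continuous symmetric functions; then $f_n*h\to h$ in $L^2(K)$. But $f_n*h\in\operatorname{Ran}(T_{f_n})$ lies in the closure of the span of finite-dimensional eigenspaces, hence in $\mathcal M$, so $\langle f_n*h,h\rangle=0$ for every $n$, forcing $\|h\|^2=0$. Combining density with the Schur orthogonality relations shows $\{\sqrt{d_\sigma}\,\phi_{ij}^\sigma\}$ is an orthonormal basis for $L^2(K)$.

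The main obstacle is the completeness step: the orthogonality relations are a formal consequence of Schur's lemma, but proving that matrix coefficients exhaust $L^2(K)$ requires genuine spectral input. One must leverage compactness of $K$ in an essential way — either through the Hilbert-Schmidt property of convolution operators as above, or alternatively through a Stone-Weierstrass argument on $C(K)$ using a faithful finite-dimensional representation of the compact Lie group $K$ to separate points, combined with the fact that the span of matrix coefficients is an algebra closed under conjugation.
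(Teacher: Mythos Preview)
Your proof is correct and follows the classical route to Peter--Weyl: Schur orthogonality for the normalization, then spectral theory of compact self-adjoint convolution operators for completeness. One small notational slip: in your orthogonality integral you use $k$ both as the integration variable on $K$ and as a row index in $\phi_{kl}^\tau$; rename one of them. You might also make explicit the step that an element $g$ of a finite-dimensional $R$-invariant irreducible subspace $E_\mu\subset L^2(K)$ is itself a matrix coefficient (via $g(k)=(R_kg)(e)$ and Riesz on $E_\mu$), since that is what places $E_\mu\subset\mathcal M$.

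As for comparison with the paper: there is nothing to compare. The paper does not prove Theorem~\ref{th10}; it is stated as the classical Peter--Weyl theorem and attributed to Sugiura \cite{Su} with the remark ``For more details, see \cite{Su}.'' The theorem is invoked only as a tool in the proof of Proposition~\ref{prop5}. So your write-up supplies an argument the paper deliberately omits, and any standard proof (the compact-operator argument you give, or the Stone--Weierstrass alternative you mention) would serve equally well here.
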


\begin{proposition}\label{prop5}
The set $\left\{V_{B_\sigma}: \sigma\in\hat K\right\}$ is an orthonormal basis for $L^2(G^\times).$
\end{proposition}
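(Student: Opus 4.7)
My plan is to split the statement of Proposition \ref{prop5} into three parts: orthogonality of the vectors inside a single $V_{B_\sigma}$, orthogonality between $V_{B_\sigma}$ and $V_{B_{\sigma'}}$ for distinct $\sigma,\sigma'\in\hat K$, and completeness in $L^2(G^\times)$. Each part should be handled by a tool already set up in this section.

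For orthogonality inside a fixed $V_{B_\sigma}$, I would just invoke Lemma \ref{lemma1} applied to the four orthonormal vectors $\psi_{\alpha,i}^\sigma,\psi_{\beta,j}^\sigma,\psi_{\alpha',i'}^\sigma,\psi_{\beta',j'}^\sigma\in B_\sigma$. Both Hilbert-space inner products appearing on the right-hand side of the lemma collapse to Kronecker deltas, so (up to an overall normalization constant) the $V$'s form an orthonormal family inside $V_\sigma$.

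For orthogonality across $\sigma\neq\sigma'$, I would expand the two matrix coefficients in exactly the same way as in the proof of Lemma \ref{lemma1}, writing
\[ V_{\psi_{\alpha,i}^\sigma}^{\psi_{\beta,j}^\sigma}(z,k)=\sum_{|\gamma|=|\alpha|}\eta_{\alpha\gamma}(k)\,\phi_{\gamma\beta}(z)\,\phi_{ji}^\sigma(k), \]
and the analogous formula for $\sigma'$. In the $L^2(G^\times)$-inner product the $z$-integral collapses by orthogonality of the special Hermite basis $\{\phi_{\gamma\beta}\}$, and the pointwise-in-$k$ identity $\sum_{|\gamma|=|\alpha|}\eta_{\alpha\gamma}(k)\,\overline{\eta_{\alpha'\gamma}(k)}=\delta_{\alpha\alpha'}$ (a restatement of the unitarity of $\mu_\lambda(k)$ on each eigenspace of $H_\lambda$) collapses the metaplectic sum. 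What remains is a constant multiple of $\int_K \phi_{ji}^\sigma(k)\,\overline{\phi_{j'i'}^{\sigma'}(k)}\,dk$, which vanishes for $\sigma\neq\sigma'$ by the Peter--Weyl theorem (Theorem \ref{th10}). This is the main technical obstacle: one must manage the many index sums carefully so that the metaplectic/Schr\"odinger factor decouples cleanly from the $K$-factor before Peter--Weyl can be applied.

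For completeness, suppose $F\in L^2(G^\times)$ is orthogonal to every element of $\bigcup_{\sigma}V_{B_\sigma}$. Unwinding the definition of $W_\sigma$ identifies the $L^2$-inner product $\langle F,V_{\psi_{\alpha,i}^\sigma}^{\psi_{\beta,j}^\sigma}\rangle$ with (the conjugate of) the matrix coefficient $\langle W_\sigma(F)\psi_{\alpha,i}^\sigma,\psi_{\beta,j}^\sigma\rangle_{\mathcal{H}_\sigma^2}$, so its assumed vanishing on the orthonormal basis $B_\sigma$ forces $W_\sigma(F)=0$ for every $\sigma\in\hat K$. Substituting into the Plancherel identity of Proposition \ref{prop05} gives $\|F\|_{L^2(G^\times)}=0$, hence $F=0$. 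Combined with the orthogonality from the preceding two steps, this proves the claim.
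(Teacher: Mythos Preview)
Your argument is essentially the paper's own proof. The paper handles orthonormality in one line by citing Peter--Weyl (the within-$\sigma$ part having already been noted via Lemma \ref{lemma1}), whereas you spell out the cross-$\sigma$ computation explicitly; and your completeness argument via $W_\sigma$ and the Plancherel formula of Proposition \ref{prop05} is exactly the paper's.

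One small slip in your completeness step: the identity you want is
\[
\big\langle F,\,V_{\psi_{\alpha,i}^\sigma}^{\psi_{\beta,j}^\sigma}\big\rangle_{L^2(G^\times)}
=\int_{G^\times}F\,\overline{V_{\psi_{\alpha,i}^\sigma}^{\psi_{\beta,j}^\sigma}}
=\overline{\big\langle W_\sigma(\overline{F})\,\psi_{\alpha,i}^\sigma,\psi_{\beta,j}^\sigma\big\rangle},
\]
so it is $W_\sigma(\overline{F})$, not $W_\sigma(F)$, whose matrix coefficients vanish. This is how the paper writes it, and Proposition \ref{prop05} then gives $\overline{F}=0$, hence $F=0$. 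The correction is cosmetic and does not affect your argument.
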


\begin{proof}
By Theorem \ref{th10}, it follows that $\left\{V_{B_\sigma}: \sigma\in\hat K\right\}$
is an orthonormal set. It only remains to prove the completeness. For this,
suppose $F\in V_{B_\sigma}^\perp,$ then
\begin{eqnarray*}
\left\langle W_\sigma(\overline{F})\psi_{\alpha,i}^\sigma,\psi_{\beta,j}^\sigma\right\rangle
&=& \int_{K}\int_{\mathbb{C}^n}\overline{F}(z,k)V_{\psi_{\alpha,i}^\sigma}^{{\psi_{\beta,j}^\sigma}}(z,k)dzdk\\
&=& \left\langle F,V_{\psi_{\alpha,i}^\sigma}^{\psi_{\beta,j}^\sigma} \right\rangle=0,\nonumber
\end{eqnarray*}
whenever $\psi_{\alpha,i}^\sigma,\psi_{\beta,j}^\sigma\in B_\sigma.$ Hence, it follows that
$W_\sigma(\overline{F})=0$ for all $\sigma\in\hat K.$ Thus, by Proposition \ref{prop05},
we conclude that $F=0.$
\end{proof}
Moreover, by using the fact that $V_{B_\sigma}$ is an orthonormal basis for $V_{\sigma},$
as a corollary to Proposition \ref{prop5}, we infer that $L^2(G^\times)=\bigoplus\limits_{\sigma\in\hat K}V_{\sigma}.$

\smallskip

Now, we state our main result of this section. Let $\text{A}$ and $\text{B}$ are
Lebesgue measurable subsets of $\mathbb{R}^n$ such that $0<m(\text{A}),m(\text{B})$
$<\infty,$ where $m$ denotes the Lebesgue  measure on $\mathbb R^n.$

\begin{theorem}\label{th2}
Let $F\in L^1\cap L^2(G)$ be supported on $(\Sigma\times\mathbb{R})\times K.$

\smallskip

$(i)$ If $~\Sigma$ has finite Lebesgue measure and $\hat{F}(\lambda,\sigma)$ is a rank
one operator for all $(\lambda,\sigma)\in\mathbb R^\ast\times\hat K,$ then $F=0.$
\smallskip

$(ii)$ If $\Sigma=\text{A}\times\text{B}$ and $\hat{F}(\lambda,\sigma)$ is a finite
rank operator for all $(\lambda,\sigma)\in\mathbb R^\ast\times\hat K,$ then $F=0.$
\end{theorem}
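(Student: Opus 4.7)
The plan is to reduce Theorem~\ref{th2} to a statement about the Weyl transform $W_\sigma^\lambda$ on $G^\times=\mathbb{C}^n\times K$, and then to combine the multiplicativity $W_\sigma^\lambda(F\times_\lambda H)=W_\sigma^\lambda(F)W_\sigma^\lambda(H)$ from Proposition~\ref{prop2}(ii) with the Peter--Weyl-type orthonormal decomposition $L^2(G^\times)=\bigoplus_{\sigma\in\hat K}V_\sigma$ that follows from Proposition~\ref{prop5}. Taking the partial Fourier transform in the central variable, $F^\lambda(z,k)=\int_\mathbb R F(z,t,k)e^{i\lambda t}\,dt$, preserves the support condition on $z$ (so $F^\lambda$ is supported in $\Sigma\times K$) and gives $\hat F(\lambda,\sigma)=W_\sigma^\lambda(F^\lambda)$. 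So it suffices to prove $F^\lambda\equiv 0$ for almost every $\lambda\in\mathbb R^\ast$; the Plancherel formula for $G$ then yields $F=0$.

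For part (i), fix $\lambda$ and $\sigma$. Using Proposition~\ref{prop5}, write $F^\lambda=\sum_\sigma F^\lambda_\sigma$ with $F^\lambda_\sigma\in V_\sigma$. Since $W_\sigma^\lambda$ identifies $V_\sigma$ isometrically (up to a constant) with the Hilbert--Schmidt operators on $\mathcal H_\sigma^2$, the finite-rank assumption forces $F^\lambda_\sigma$ to be a finite linear combination of matrix coefficients $V_{u_j^\sigma}^{v_j^\sigma}$ of the irreducible representation $\rho_\sigma^\lambda$. To extract a contradiction with the compact $z$-support of $F^\lambda$, I would apply Proposition~\ref{prop2}(ii) with $H$ chosen so that $W_\sigma^\lambda(H)$ is the rank-one operator $|\psi_{\alpha,i}^\sigma\rangle\langle\psi_{\beta,j}^\sigma|$, giving $W_\sigma^\lambda(F^\lambda\times_\lambda H)=|w\rangle\langle\psi_{\beta,j}^\sigma|$ with $w=W_\sigma^\lambda(F^\lambda)\psi_{\alpha,i}^\sigma\in\mathcal H_\sigma^2$. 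Under the correspondence of Proposition~\ref{prop5}, $F^\lambda\times_\lambda H$ is then (up to a constant) the single matrix coefficient $V_w^{\psi_{\beta,j}^\sigma}(z,k)=\langle\rho_\sigma^\lambda(z,k)w,\psi_{\beta,j}^\sigma\rangle$. Using (\ref{exp09}) to resolve $\mu_\lambda(k)$ in the Hermite basis and pairing in the $\mathcal H_\sigma$-slot, the $z$-dependence of this matrix coefficient becomes a superposition of short-time Fourier transforms with Hermite windows $\phi_\beta^\lambda$, each of which has the Bargmann--Fock form (an entire function of $z$ multiplied by the Gaussian $e^{-|\lambda||z|^2/4}$). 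Combined with the compact $z$-support inherited from $F^\lambda$ through the convolution, this forces $w=0$; letting the basis parameters and $H$ vary then yields $W_\sigma^\lambda(F^\lambda)=0$ for every $\sigma$, and Proposition~\ref{prop05} gives $F^\lambda=0$.

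Part (ii) runs parallel, but ``finite rank + compact support'' is replaced by ``rank one + finite Lebesgue measure''. The rank-one assumption $\hat F(\lambda,\sigma)=|u^\sigma\rangle\langle v^\sigma|$ directly identifies $F^\lambda_\sigma$ as a constant multiple of the single matrix coefficient $V_{u^\sigma}^{v^\sigma}(z,k)$. After using Proposition~\ref{prop2}(ii) to replace the ``ket'' slot by a Hermite basis vector $\psi_{\beta,j}^\sigma$, the $z$-slice again acquires the form ``entire function times $e^{-|\lambda||z|^2/4}$''. An entire function whose zero set has positive Lebesgue measure must be identically zero, so the hypothesis $|\Sigma|<\infty$ forces $u^\sigma=0$ (or $v^\sigma=0$) for every $\sigma$, whence $F^\lambda=0$.

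The main technical obstacle is the analyticity input. General matrix coefficients $V_u^v$ of $\rho_\sigma^\lambda$ with $u,v\in\mathcal H_\sigma^2$ are only continuous, not real-analytic, on $G^\times$; however, once the ``ket'' vector is forced into the Hermite-type basis $\psi_{\beta,j}^\sigma$ via twisted convolution, the $z$-slice becomes a short-time Fourier transform with Hermite window, which carries Bargmann-type analyticity. Engineering the twisted convolution so that this reduction occurs while still controlling the $z$-support of $F^\lambda\times_\lambda H$, and carefully handling the metaplectic factor $\mu_\lambda(k)$ via (\ref{exp09}) and the Peter--Weyl expansion on $K$, is the heart of both parts; part (ii) additionally invokes the Fock-space uniqueness theorem to replace ``compact'' by ``finite Lebesgue measure''.
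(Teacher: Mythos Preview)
Your reduction to the Weyl transform $W_\sigma^\lambda(F^\lambda)=\hat F(\lambda,\sigma)$ and the use of the decomposition $L^2(G^\times)=\bigoplus_\sigma V_\sigma$ are exactly right, and match the paper. The serious gap is the step you yourself flag as ``the heart'': twisted convolution with $H$ does \emph{not} preserve the $z$-support condition. Concretely, the function $H\in V_\sigma$ with $W_\sigma^\lambda(H)=|\psi_{\alpha,i}^\sigma\rangle\langle\psi_{\beta,j}^\sigma|$ is (up to a constant and the $K$-factor) the special Hermite function $\phi_{\alpha\beta}^\lambda(z)$, which is Gaussian-decaying but nowhere compactly supported. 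In the twisted convolution
\[
(F^\lambda\times_\lambda H)(z,k)=\int_{G^\times}F^\lambda\big((z,k)(w,s)^{-1}\big)H(w,s)\,e^{-\frac{i}{2}\lambda\operatorname{Im}(\cdots)}\,dw\,ds,
\]
the first argument of $F^\lambda$ is $z-ks^{-1}w$, so the $z$-support of the convolution is $\Sigma+K\cdot(\text{supp}_w H)$, which is all of $\mathbb C^n$. Thus the ``compact support plus analyticity'' (or ``finite measure plus analyticity'') contradiction never fires. A secondary imprecision: with Hermite window $\phi_\beta^\lambda$ the $z$-slice is $e^{-|\lambda||z|^2/4}$ times a \emph{polyanalytic} function, not an entire one; this is still real-analytic, so the conclusion you want would survive, but only once the support issue is handled.

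The paper avoids this obstacle in a different way. For part~(i) it never convolves with an auxiliary $H$; instead it forms $\bar\tau=(F^\lambda)^\ast\times_\lambda F^\lambda$, which \emph{does} stay compactly supported in $z$ because both factors do. Then $W_\sigma(\bar\tau)$ is positive of finite rank, and the spectral theorem yields $\tau_\sigma=\sum_{j=1}^N V_{h_j}^{h_j}$ with $h_j\in\mathcal H_\sigma^2$. The contradiction is obtained not via Bargmann--Fock analyticity but via Proposition~\ref{prop4} and the elementary linear-algebra Lemma~\ref{lemma2}: writing $\tau_\sigma(z,k)$ as a Fourier integral with kernel $K_y(\xi)=\sum_j b_j(k)\chi_j(\xi+y)\overline{\phi_j(\xi)}$, the compact $z$-support forces each $h_j$ to be compactly supported in $\mathbb R^n$, and then ordinary Fourier uniqueness gives $\tau_\sigma=0$. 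For part~(ii) the paper identifies $\tau_\sigma=(2\pi)^{-n}V_{f_1}^{f_2}$ directly and invokes Proposition~\ref{prop1}, which rests on the Jaming--Janssen theorem (Theorem~\ref{thm001}) about supports of Wigner distributions rather than on any Fock-space argument. So both the mechanism for retaining the support hypothesis and the analytic input used to derive the contradiction differ from what you propose.
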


Given $\phi,\psi\in L^2(\mathbb{R}^n),$ we define the Fourier-Wigner transform by
\[T(\phi,\psi)(z)=\left\langle\pi(z)\phi,\psi\right\rangle,\]
where $\pi$ is the Schr\"{o}dinger representation corresponding to $\lambda=1.$ Next,
we state the following result from \cite{Jam, Jan}.
\begin{theorem}\label{thm001}
For $\phi,\psi\in L^2(\mathbb{R}^n),$ write $X=T(\phi,\psi).$ If
$\left\{z\in\mathbb C^n:~X(z)\neq0\right\}$ has finite Lebesgue measure, then $X=0.$
\end{theorem}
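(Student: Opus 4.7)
The plan is to show that $e^{|z|^2/4}X(z)$ is real-analytic on $\mathbb R^{2n}\cong\mathbb C^n$. A real-analytic function on $\mathbb R^{2n}$ that is not identically zero has zero set of Lebesgue measure zero, so the finite-measure hypothesis on $\{X\neq 0\}$ will force $e^{|z|^2/4}X(z)\equiv 0$, and the never-vanishing Gaussian then gives $X\equiv 0$.

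For the real-analyticity I would pass to the Fock picture via the Bargmann transform $B\colon L^2(\mathbb R^n)\to\mathcal F$: then $B\phi,B\psi$ are entire, and the Schr\"{o}dinger representation takes the explicit form $\tilde\pi(z)F(w)=e^{-|z|^2/4+wz/\sqrt 2}F(w-\bar z/\sqrt 2)$ on $\mathcal F$. A short change of variables in the Fock inner product yields
\[e^{|z|^2/4}X(z)=c_n\int_{\mathbb C^n}e^{-\bar u\bar z/\sqrt 2}\,B\phi(u)\,\overline{B\psi(u+\bar z/\sqrt 2)}\,e^{-|u|^2}\,du\]
for a dimensional constant $c_n$. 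Now decouple $\bar z$ from $z$ by replacing $\bar z$ with an independent complex variable $w\in\mathbb C^n$, and rewrite $\overline{B\psi(u+w/\sqrt 2)}=(B\psi)^{\sharp}(\bar u+\bar w/\sqrt 2)$, where $(B\psi)^{\sharp}(v):=\overline{B\psi(\bar v)}$ is entire. The integrand then depends holomorphically on $w$ (through the exponential $e^{-\bar u w/\sqrt 2}$) and, after conjugation is unwound, holomorphically on $z$ through $(B\psi)^{\sharp}$. Hence the right-hand side extends to a jointly entire function $H(z,w)$ on $\mathbb C^n\times\mathbb C^n$ satisfying $H(z,\bar z)=e^{|z|^2/4}X(z)$. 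Since the diagonal $\{(z,\bar z):z\in\mathbb C^n\}$ is a maximally totally real submanifold of $\mathbb C^{2n}$, the restriction $z\mapsto H(z,\bar z)$ is real-analytic on $\mathbb R^{2n}$.

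Suppose now that $X\not\equiv 0$. Then $z\mapsto H(z,\bar z)$ is a nonzero real-analytic function on $\mathbb R^{2n}$, so its zero set has Lebesgue measure zero; equivalently, $\{X\neq 0\}$ has full measure in $\mathbb R^{2n}$. This contradicts the hypothesis that $\{X\neq 0\}$ has finite Lebesgue measure, so $X\equiv 0$.

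The principal difficulty is the first step, producing the joint entire extension $H$ of $e^{|z|^2/4}X$. The Bargmann route above keeps the argument short but demands careful bookkeeping of conjugation conventions (for instance, verifying the factorisation on Hermite vectors and against the coherent-state identity $\langle\pi(z)\phi_0,\phi_0\rangle=e^{-|z|^2/4}$). An alternative is to expand $\phi=\sum a_\alpha\phi_\alpha$, $\psi=\sum b_\beta\phi_\beta$ in the Hermite basis, note that $\langle\pi(z)\phi_\alpha,\phi_\beta\rangle$ equals a polynomial of Laguerre type in $(z,\bar z)$ times $e^{-|z|^2/4}$, and verify directly that $\sum_{\alpha,\beta}a_\alpha\overline{b_\beta}P_{\alpha\beta}(z,w)$ converges to an entire function of the decoupled variables $(z,w)\in\mathbb C^{2n}$ via the $\ell^2$-to-Fock-space growth control on $B\phi$ and $B\psi$. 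Once the real-analyticity is in hand, the measure-theoretic conclusion is classical.
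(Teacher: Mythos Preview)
The paper does not prove this statement; it is quoted from the references \cite{Jam,Jan} (Jaming and Janssen) and used as a black box. So there is no in-paper argument to compare against.

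Your strategy---show that $e^{|z|^2/4}X(z)$ is real-analytic on $\mathbb R^{2n}$ via the Bargmann/Fock picture, then use that a nonzero real-analytic function has zero set of Lebesgue measure zero---is correct and is essentially the route taken in Janssen's paper. Two places in your sketch deserve tightening. First, the bookkeeping in the middle paragraph is tangled: after replacing $\bar z$ by an independent variable $w$, the factor $(B\psi)^\sharp(\bar u+\bar w/\sqrt2)$ you write down depends on $\bar w$, not $w$, so as written it is anti-holomorphic in $w$; you need to be consistent about which variable carries the holomorphic dependence and which carries the anti-holomorphic one before restricting to the totally real diagonal. Second, and more substantively, the existence of the jointly entire extension $H(z,w)$ is not automatic from the integral formula: the crude Fock pointwise bound $|F(u)|\le\|F\|_{\mathcal F}\,e^{|u|^2/2}$ applied to both $B\phi$ and $B\psi$ inside the integral leaves an integrand that grows like $e^{c|u|(|z|+|w|)}$ after the Gaussian weight is absorbed, so the integral need not converge for arbitrary complex $(z,w)$. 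The clean fix is to use the reproducing-kernel structure of $\mathcal F$ to collapse the integral to a pointwise product of $B\phi$ and $\overline{B\psi}$ evaluated at arguments depending holomorphically on $z$ and on $\bar z$ separately (this is what actually happens, and it makes the joint entirety immediate), or to carry out the Hermite-expansion alternative you mention with explicit growth control on the Laguerre polynomials. Once real-analyticity is secured, your measure-theoretic conclusion is exactly right.
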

In view of Theorem \ref{thm001}, we prove the following analogous result for the
Fourier-Wigner transform. In fact, it says that the Fourier-Wigner transform of
a pair of non-zero functions cannot be finitely supported.
\begin{proposition}\label{prop1}
For $f_j\in{\mathcal H}_\sigma^2; j=1,2,$ denote $F=V_{f_1}^{f_2}.$ If
$\{z\in\mathbb C^n : F(z,k)\neq0\}$ has finite Lebesgue measure for all $k\in K,$
then $F=0.$
\end{proposition}

\begin{proof}
Since $f_j\in{\mathcal H}_\sigma^2,$ we can express
$f_j=\phi_j\otimes h_j,$ where $\phi_j\in L^2(\mathbb{R}^n)$ and
$h_j\in{\mathcal H}_\sigma.$ Then
\begin{eqnarray*}
F(z,k) &=& \left\langle\rho_\sigma(z,k)
f_1,f_2\right\rangle \\
&=& \left\langle\pi(z)\mu(k)\otimes\sigma(k)(\phi_1\otimes h_1),\phi_2\otimes h_2\right\rangle \\
&=& \left\langle\pi(z)\mu(k)\phi_1,\phi_2\right\rangle\left\langle\sigma(k)h_1,h_2\right\rangle\\
&=& \left\langle\pi(z)\psi_1,\phi_2\right\rangle\left\langle\sigma(k)h_1,h_2\right\rangle\\
&=& X(z)\left\langle\sigma(k)h_1,h_2\right\rangle,\\
\end{eqnarray*}
where $\psi_1=\mu(k)\phi_1$ and $X=T(\psi_1,\phi_2).$ If $\left\langle\sigma(k)h_1,h_2\right\rangle=0$
for some $k\in K,$ then $F(.,k)=0.$ On the other hand, if $\left\langle\sigma(k)h_1,h_2\right\rangle\neq0,$
then $X$ is a non-zero function that supported on a set of finite Lebesgue measure. Thus, in view of
Theorem \ref{thm001}, we conclude that $F=0.$
\end{proof}

Next, we prove that if for $F\in L^1\cap L^2(G^\times),$ the operator $W_\sigma(F)$
is of finite rank for each $\sigma\in\hat K,$ then $F=0.$ For proving this, we require
the following crucial results.

\smallskip
For $k\in K,$ define $b_j(k)=\left\langle\sigma(k)\psi_j,\psi_j\right\rangle,$ where $\psi_j\in H_\sigma.$
Then $b_j(e)=\|\psi_j\|^2.$ Set $\|\psi_j\|=\alpha_j.$

\begin{proposition}\label{prop4}
For $\phi_j\in L^2(\mathbb{R}^n);~j\in\{1,\ldots,N\},$ define the function
$\psi$ on $\mathbb C^n$ by $\psi(z,k)=\sum\limits_{j=1}^Nb_j(k)\left\langle\pi(z)\mu(k)\phi_j,\phi_j\right\rangle,$
where $k\in K.$ If $\psi$ is supported on a subset $\mathcal E\times\mathcal F$ of $\mathbb C^n$
such that $0<m(\mathcal E),m(\mathcal F)<\infty,$ then $\psi\equiv0.$
\end{proposition}
\begin{proof}
Let $e$ be the identity element of the group $K.$ Then $\mu(e)=I$ is the identity operator on $L^2(\mathbb{R}^n).$
For $z=x+iy\in\mathbb C^n,$ we write $\psi_y(x)=\psi(z,e).$ Since $\phi_j\in L^2(\mathbb{R}^n),$
there exists a set $A$ of measure zero such that $|\phi_j|$ is finite on $\mathbb{R}^n\smallsetminus A.$
Denote $K_y(\xi)=\sum\limits_{j=1}^N\alpha_j^2\phi_j(\xi+y)\overline{\phi_j(\xi)}$ for almost all $\xi\in\mathbb R^n.$
Then by the hypothesis, $\psi$ can be expressed as
\begin{equation}\label{exp10}
\psi_y(x)=\int_{\mathbb{R}^n}e^{i(x\cdot\xi+\frac{1}{2}x\cdot y)}K_y(\xi)d\xi.
\end{equation}
Since $\psi$ is supported on $\mathcal E\times\mathcal F$ of finite Lebesgue measure,
it follows that $\psi_y=0$ for all $y\in\mathbb R^n\smallsetminus \mathcal F.$ Hence
we infer that $K_y=0,$ whenever $y\in\mathbb R^n\smallsetminus \mathcal F.$

\smallskip

Define the function $\chi$ on $\mathbb R^n\smallsetminus A$ by
$\chi=\left(\alpha_1\phi_1,\ldots,\alpha_N\phi_N\right).$
If $\chi=0$ on  $\mathbb R^n\smallsetminus A,$ then result will follow.
Suppose $\chi\neq0,$ then there exists $\xi_1\in\mathbb R^n\smallsetminus A$ such
that $\chi(\xi_1)\neq0.$

\smallskip

 Now, if it happens that $\chi=0$ on $\mathbb R^n\smallsetminus (B(\xi_1)\cup A),$
where $B(\xi_1)$ is the set $\xi_1+(\mathcal F\cup\{0\}),$ then  $\phi_j$'s are
finitely supported.

\smallskip

Otherwise, we can choose $\xi_l\in\mathbb R^n\smallsetminus\bigcup\limits_{i=1}^{l-1}B(\xi_i)\cup A,$
where $B(\xi_i)=\xi_i+(\mathcal F\cup\{0\})$ such that $\chi(\xi_l)\neq0,$
whenever $l\leq{N}.$ For $l\neq m,$ we have $\xi_l-\xi_m\not\in\mathcal F.$
By the hypothesis,
$K_{\xi_l-\xi_m}(\xi)=\sum\limits_{j=1}^N\alpha_j^2\phi_j(\xi+\xi_l-\xi_m)\overline{\phi_j(\xi)}=0,$
whenever $\xi\in\mathbb R^n\smallsetminus A.$ Hence it follows that $\chi(\xi_l)$ and $\chi(\xi_m)$
are orthogonal. Thus, the set $S=\{\chi(\xi_1),\ldots,\chi(\xi_N)\}$ is an orthogonal set in $\mathbb{C}^{N}.$

\smallskip

Therefore, if $\xi\in\mathbb R^n\smallsetminus\bigcup\limits_{l=1}^{N}\left(B(\xi_l)\cup A\right),$
then $\chi(\xi)\perp S,$ and hence $\chi(\xi)=0.$ Thus, each of $\phi_j$ is supported on a set of
finite Lebesgue measure.

\smallskip

Now, for $k\in K,$ $\psi$ can be expressed as
\[\psi_y(x)=\int_{\mathbb{R}^n}e^{i(x\cdot\xi+\frac{1}{2}x\cdot y)}\left(\sum\limits_{j=1}^Nb_j(k)\chi_j(\xi+y)\overline{\phi_j(\xi)}\right)d\xi,\]
where $\chi_j=\mu(k)\phi_j\in L^2(\mathbb{R}^n).$ Let $H_y(\xi)=\sum\limits_{j=1}^Nb_j(k)\chi_j(\xi+y)\overline{\phi_j(\xi)}.$
Then $H_y$ is finitely supported for all $y\in\mathbb R^n.$ By the Benedicks theorem,
$H_y$ and its Fourier transform both cannot be finitely supported simultaneously.
Hence we conclude that $\psi_y\equiv0$ for all $y\in\mathbb R^n.$
\end{proof}

\begin{remark}
Instead of the rectangle $\mathcal E\times\mathcal F$ in $\mathbb R^{2n}$ if
we consider a set $E$ of finite Lebesgue measure in $\mathbb C^n,$ then
the projection of $E$ on $\mathbb R^n$ need not be a set of finite
measure. Hence the above proof of Proposition \ref{prop4} will not work.
\end{remark}

Let $\mathcal E$ and $\mathcal F$ are Lebesgue measurable subsets of $\mathbb{R}^n$ such that
$0<m(\mathcal E),m(\mathcal F)$ $<\infty$ and $\Sigma=\mathcal E\times\mathcal F.$

\begin{theorem}\label{th1}
Let $F\in L^1\cap L^2(G^\times)$ be supported on $\Sigma\times K.$ If for each $\sigma\in\hat K,$
the operator $W_\sigma(F)$ has finite rank, then $F=0.$
\end{theorem}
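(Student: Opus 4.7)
The plan is to reduce to Proposition~\ref{prop4} through a Peter-Weyl decomposition of $F$ in the $K$-variable. The finite-rank hypothesis, transferred through the orthogonality of matrix coefficients on $K$ and their Clebsch-Gordan coupling to the metaplectic representation, will force every Peter-Weyl coefficient of $F$ to be a finite sum of Fourier-Wigner transforms on $\mathbb{C}^n$; compact support of $F$ then closes the argument.

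First I would reduce to the case in which $W_\sigma(F)$ is positive. By Proposition~\ref{prop2}(ii), $W_\sigma(F\times F^*)=W_\sigma(F)W_\sigma(F)^*$ is positive, self-adjoint, and of rank at most $N_\sigma$. Compactness of $\Sigma$ and $K$ ensures that $F\times F^*$ is still supported in a compact subset of $\mathbb{C}^n\times K$; and since $W_\sigma(\cdot)=0$ for every $\sigma$ forces the function to vanish (Proposition~\ref{prop05}), it suffices to prove the theorem under the additional hypothesis
\[W_\sigma(F)u=\sum_{l=1}^{N_\sigma}c_l\,\langle u,u_l^\sigma\rangle\,u_l^\sigma,\qquad u_l^\sigma=\sum_{\gamma,m}a^{l,\sigma}_{\gamma,m}\,\phi_\gamma\otimes e_m^\sigma\in\mathcal H_\sigma^2.\]

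Computing $\langle W_\sigma(F)(\phi_\alpha\otimes e_i^\sigma),\phi_\beta\otimes e_j^\sigma\rangle$ in two ways, and using $\mu(k)\phi_\alpha=\sum_{|\gamma|=|\alpha|}\eta_{\alpha\gamma}(k)\phi_\gamma$ together with $\langle\pi(z)\phi_\gamma,\phi_\beta\rangle=\phi_{\gamma\beta}(z)$, one obtains
\[(2\pi)^{n/2}\sum_{|\gamma|=|\alpha|}\int_K\hat F_{\gamma\beta}(k)\,\eta_{\alpha\gamma}(k)\,\phi_{ji}^\sigma(k)\,dk=\sum_{l=1}^{N_\sigma}c_l\,\overline{a^{l,\sigma}_{\alpha,i}}\,a^{l,\sigma}_{\beta,j},\]
with $\hat F_{\gamma\beta}(k):=\int F(z,k)\phi_{\gamma\beta}(z)\,dz$. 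Expanding $\hat F_{\gamma\beta}$ in the Peter-Weyl basis $\{\phi_{pq}^\tau\}$ and observing that its $(\tau,p,q)$ coefficient coincides with the special Hermite coefficient $[F^\tau_{pq}]_{\gamma\beta}$ of $F^\tau_{pq}(z):=d_\tau\int_K F(z,k)\overline{\phi_{pq}^\tau(k)}\,dk$, the integral over $K$ reduces, via orthogonality of matrix coefficients and the Clebsch-Gordan decomposition of $\mu_{|\alpha|}\otimes\sigma$ (where $\mu_{|\alpha|}$ is the $K$-representation on $\mathrm{span}\{\phi_\gamma:|\gamma|=|\alpha|\}$), to a finite linear expression in those coefficients. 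Aggregating these constraints over $\sigma\in\hat K$ produces, for each fixed $\tau,p,q$, a finite-rank factorization
\[[F^\tau_{pq}]_{\gamma\beta}=\sum_{l=1}^{M} A^{\tau,p,q}_l(\gamma)\,B^{\tau,p,q}_l(\beta),\]
which in turn forces
\[F^\tau_{pq}(z)=\sum_{l=1}^{M}\bigl\langle\pi(z)\chi_l^{\tau,p,q},\phi_l^{\tau,p,q}\bigr\rangle,\qquad \chi_l^{\tau,p,q},\phi_l^{\tau,p,q}\in L^2(\mathbb{R}^n).\]

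Since $F^\tau_{pq}$ inherits compact support in $z$ from $F$, Proposition~\ref{prop4} forces $F^\tau_{pq}\equiv 0$ for every $\tau,p,q$, and Peter-Weyl completeness in $k$ then yields $F=0$. The hardest step is the representation-theoretic aggregation: one must ensure that summing the finite-rank constraints across $\sigma\in\hat K$ produces a factorization of $[F^\tau_{pq}]_{\gamma\beta}$ whose rank is uniform in the Hermite level $|\gamma|$, since $|\gamma|$ ranges over $\mathbb{N}$ while each individual $\sigma$ couples only to finitely many Hermite levels through Clebsch-Gordan.
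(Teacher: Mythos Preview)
Your reduction to the positive case and your end-game (apply Proposition~\ref{prop4} to compactly supported finite sums of Fourier--Wigner transforms on $\mathbb{C}^n$) match the paper. The gap is exactly the one you flag: the aggregation step does not close. You decompose $F$ via Peter--Weyl in the $K$-variable into pieces $F^\tau_{pq}(z)$, and then try to assemble a finite-rank factorization of the special-Hermite matrix $[F^\tau_{pq}]_{\gamma\beta}$ from the constraints imposed by the various $W_\sigma$. But a fixed $\tau$ couples, through Clebsch--Gordan at the different Hermite levels $|\gamma|$, to \emph{infinitely many} $\sigma$'s, and the hypothesis gives only a $\sigma$-dependent rank bound $N_\sigma$ with no uniformity. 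There is no mechanism in your outline that prevents $M$ from being infinite, so the conclusion $F^\tau_{pq}=\sum_{l=1}^{M}\langle\pi(z)\chi_l,\phi_l\rangle$ with finite $M$ is unjustified.

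The paper sidesteps this entirely by using a \emph{different} orthogonal decomposition of $L^2(G^\times)$, namely $L^2(G^\times)=\bigoplus_{\sigma\in\hat K}V_\sigma$ from Proposition~\ref{prop5}, where $V_\sigma$ is the closed span of the Fourier--Wigner transforms $V_f^g(z,k)=\langle\rho_\sigma(z,k)f,g\rangle$ on $G^\times$. This decomposition is diagonal for the Weyl transforms: $W_\sigma$ sees only the $V_\sigma$-component. Writing $\bar\tau=F^\ast\times F$ and $\tau=\bigoplus_\sigma\tau_\sigma$, the orthogonality relation of Lemma~\ref{lemma1} immediately identifies $\tau_\sigma=\sum_{j=1}^{N_\sigma}V_{h_j}^{h_j}$ with $N_\sigma$ the rank of $W_\sigma(\bar\tau)$ --- no cross-$\sigma$ aggregation is needed. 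For fixed $k$, $\tau_\sigma(z,k)=\sum_j b_j(k)\langle\pi(z)\mu(k)\phi_j,\phi_j\rangle$ is then a finite sum of Fourier--Wigner transforms on $\mathbb{C}^n$ with compact support in $z$, so Proposition~\ref{prop4} gives $\tau_\sigma=0$ for every $\sigma$, and Proposition~\ref{prop05} finishes. The moral: decompose by the Fourier--Wigner transforms of $\rho_\sigma$ rather than by Peter--Weyl in $k$, so that each piece is governed by a single $W_\sigma$.
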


\begin{proof}
Let $\bar\tau=F^\ast\times F,$ where $F^\ast(v)=\overline{F\left(v^{-1}\right)}.$
Then $W_\sigma(\bar\tau)=W_\sigma(F)^\ast W_\sigma(F)$ is a positive, finite rank
operator on $\mathcal H_\sigma^2.$ By the spectral theorem, it follows that
\begin{equation}\label{exp01}
W_\sigma(\bar\tau)f=\sum_{j=1}^Na_j\left\langle f,f_j\right\rangle f_j,
\end{equation}
where $\{f_1,\ldots,f_N\}$ is an orthonormal basis for the range of $W_\sigma(\bar\tau)$
which satisfies $W_\sigma(\bar\tau)f_j=a_jf_j$ with $a_j\geq0.$
Now, for $f,g\in \mathcal{H}_\sigma^2,$ we have
\begin{eqnarray}\label{exp02}
\left\langle W_\sigma(\bar\tau)f,g\right\rangle &=& \sum_{j=1}^Na_j\left\langle f,f_j\right\rangle \left\langle f_j,g\right\rangle\\
&=& (2\pi)^{-n}\sum_{j=1}^Na_j\int_{K}\int_{\mathbb{C}^n}V_{f}^{g}(z,k)\overline{V_{f_j}^{f_j}(z,k)}dzdk.\nonumber
\end{eqnarray}
Since $\tau\in L^2(G^\times),$ by Proposition \ref{prop5}, we can write
$\tau=\bigoplus\limits_{\sigma\in\hat K}\tau_\sigma.$ In view of the above
decomposition and by the definition of $W_\sigma(\bar\tau),$ we can write
\begin{eqnarray}\label{exp03}
\left\langle W_\sigma(\bar\tau)f,g\right\rangle &=& \int_{K}\int_{\mathbb{C}^n}\bar\tau(z,k)\left\langle\rho_\sigma^1(z,k)f,g\right\rangle dzdk\\
&=& \int_{K}\int_{\mathbb{C}^n}\overline{{\tau_\sigma}}(z,k)V_{f}^{g}(z,k)dzdk.\nonumber
\end{eqnarray}
Hence, by comparing (\ref{exp02}) with (\ref{exp03}) in view of the orthogonality
relation for the Fourier-Wigner transform as in Lemma \ref{lemma1}, it follows that
\begin{equation}\label{exp04}
{\tau_\sigma}=\sum_{j=1}^N V_{h_j}^{h_j},
\end{equation}
where $h_j=(2\pi)^{-\frac{n}{2}}\sqrt{a_j}f_j\in \mathcal{H}_\sigma^2.$
Now, let $h_j=\phi_j\otimes \psi_j$ for some $\phi_j\in L^2(\mathbb{R}^n)$
and $\psi_j\in{\mathcal H}_\sigma.$ Then from (\ref{exp04}) we have
\begin{eqnarray*}
{\tau_\sigma}(z,k) = \sum_{j=1}^N\left\langle\rho_\sigma(z,k)h_j,h_j\right\rangle
&=& \sum_{j=1}^N\left\langle\pi(z)\mu(k)\phi_j,\phi_j\right\rangle\left\langle\sigma(k)\psi_j,\psi_j\right\rangle\\
&=& \sum_{j=1}^N b_j(k)\left\langle\pi(z)\chi_j,\phi_j\right\rangle,
\end{eqnarray*}
where $\chi_j=\mu(k)\phi_j\in L^2(\mathbb{R}^n)$ and $b_j(k)=\left\langle\sigma(k)\psi_j,\psi_j\right\rangle.$
Since $\bar\tau$ is finitely supported in $\mathbb C^n$ variable, by Proposition
\ref{prop4}, it follows that $\tau_\sigma=0,$ whenever $\sigma\in\hat K.$ In view of
Plancherel formula for the Weyl transform as mentioned in Proposition \ref{prop05},
we conclude that $F=0.$
\end{proof}
Next, we prove Theorem \ref{th2} in the following two cases.

\begin{proof}[Proof of Theorem \ref{th2}]
(i). Since $F\in L^1\cap L^2(G),$ we can write
\begin{eqnarray}\label{exp06}
\hat{F}(\lambda,\sigma) &=& \int_{K}\int_{\mathbb{C}^n}\int_{\mathbb{R}}F(z,t,k)\rho_\sigma(z,t,k)dtdzdk\\
&=& \int_{K}\int_{\mathbb{C}^n}F^\lambda(z,k)\rho_\sigma(z,k)dzdk\nonumber\\
&=& W_\sigma(F^\lambda).\nonumber
\end{eqnarray}

Suppose the operator $W_\sigma(F^\lambda)$ has rank one. Then it is enough
to show that $F^\lambda=0.$ Consider the case when $\lambda=1.$ Since by
hypothesis, $W_\sigma(F^1)$ has rank one, there exist $f_j\in \mathcal{H}_\sigma^2;j=1,2$
such that $W_\sigma(\bar\tau)f=\left\langle f,f_1\right\rangle f_2$ for all
$f\in \mathcal{H}_\sigma^2,$ where $\bar\tau=F^1.$ Hence for
$f,g\in\mathcal{H}_\sigma^2,$
Lemma \ref{lemma1} yields
\begin{eqnarray}\label{exp07}
\left\langle W_\sigma(\bar\tau)f,g\right\rangle &=& \left\langle f,f_1\right\rangle \left\langle f_2,g\right\rangle\\
&=& (2\pi)^{-n}\int_{K}\int_{\mathbb{C}^n}V_{f}^{g}(z,k)\overline{V_{f_1}^{f_2}(z,k)}dzdk\nonumber.
\end{eqnarray}
Let $\tau=\bigoplus\limits_{\sigma\in\hat K}\tau_\sigma,$ where $\tau_\sigma\in V_{B_\sigma}.$
Then by definition of $W_\sigma(\bar\tau),$ it follows that
\begin{eqnarray}\label{exp08}
\left\langle W_\sigma(\bar\tau)f,g\right\rangle &=& \int_{K}\int_{\mathbb{C}^n}\overline{\tau_\sigma}(z,k)V_{f}^{g}(z,k)dzdk.
\end{eqnarray}
Now, by comparing (\ref{exp07}) with (\ref{exp08}) in view of Proposition
\ref{prop5}, we infer that $\tau_\sigma=(2\pi)^{-n}V_{f_1}^{f_2}.$ Finally, by
Proposition \ref{prop1}, it follows that $\tau_\sigma=0$ for all $\sigma\in\hat K.$
That is, $\tau=0$ and hence we conclude that $F=0.$

\smallskip

(ii).
Suppose the operator $W_\sigma(F^\lambda)$ has finite rank. We prove the result for
$\lambda=1$ and the general case will be followed by the scaling argument.
Since  $\hat{F}(1,\sigma)=W_\sigma(F^1),$ by Theorem \ref{th1}, it follows
that $F^1=0.$ Similarly, it can be shown that $F^\lambda=0$ for all $\lambda\in\mathbb R^\ast.$
Thus, we conclude that $F=0.$
\end{proof}

\section{Preliminaries on step two nilpotent group}\label{section5}
In this section, we prove an analogous result of the Benedick's theorem for the
Euclidean Fourier transform on the step two nilpotent Lie groups. However, for
the sake simplicity, we derive the result for the class of groups introduced by
G. M\'{e}tivier (see \cite{M}). These groups are step two nilpotent Lie groups
when quotiented with the hyperplane in the center becomes the Heisenberg group.
The Heisenberg-type groups introduced by  A. Kaplan (see \cite{K}) are examples
of  M\'{e}tivier group.  However, there are  M\'{e}tivier groups which are distinct
from the Heisenberg-type groups. For more details, see \cite{MS}.

\smallskip

Let $G$ be connected, simply connected Lie group with real step two nilpotent
Lie algebra $\mathfrak g.$ Then $\mathfrak g$ has the orthogonal decomposition
$\mathfrak g=\mathfrak b\oplus\mathfrak z,$ where $\mathfrak z$ is the center
of $\mathfrak g.$ Since $\mathfrak g$ is nilpotent, the exponential map
$\exp:\mathfrak g\rightarrow G$ is surjective. Thus, $G$ can be parameterized
by $\mathfrak g=\mathfrak b\oplus\mathfrak z,$ endowed with the exponential
coordinates.

\smallskip

Let $\left\{V_i: i=1,\ldots, m\right\}$ and $\left\{Z_j: j=1,\ldots,k\right\}$
be orthonormal bases of $\mathfrak b$ and $\mathfrak z$ respectively. Then for
$V+Z\in\mathfrak b\oplus\mathfrak z,$ we can identify $g\in G$ with the point
$(V, Z)\in\mathbb R^m\times\mathbb R^k$ such that $g=\exp(V+Z).$ Since
$[\mathfrak b,\mathfrak b]\subset\mathfrak z$ and $[\mathfrak g,[\mathfrak g,\mathfrak g]]=\{0\},$
by the Baker-Campbell-Hausdorff formula, the group law on $G$ can be expressed as
\[\left(V,Z\right)\left(V',Z'\right)=\left(V+V',Z+Z'+\frac{1}{2}[V,V']\right).\]
Let $dV$ and $dZ$ be the Lebesgue measures on $\mathfrak b$ and $\mathfrak z$
respectively. Then the left-invariant Haar measure on $G$ can be expressed as
$dg=dVdZ.$

\smallskip

Now, for $\omega\in\mathfrak z^\ast,$ consider the skew-symmetric bilinear form
$B_\omega$ on $\mathfrak b$ by \[B_\omega(X,Y)=\omega\left([X,Y]\right).\]
Let $m_\omega$ be the orthogonal complement of
\[r_\omega=\left\{X\in\mathfrak b:B_\omega(X,Y)=0,~\forall~ Y\in\mathfrak b\right\}\] in
$\mathfrak b.$ Then $B_\omega$ is called a non-degenerate bilinear form when
$r_\omega$ is trivial. If $B_\omega$ is non-degenerate for all $\omega\neq0,$
then $G$ is called M\'{e}tivier group.

\smallskip

Since $m_\omega$ is invariant under the skew-symmetric bilinear form $B_\omega,$
it follows that the dimension of $m_\omega$ is even.
Let $\Lambda=\{\omega\in\mathfrak z^\ast: \dim m_\omega~\text{is maximum}\}.$
Then $\Lambda$ is a Zariski open subset of $\mathfrak z^\ast$ and for
$\omega\in\Lambda,$ there exists an orthonormal almost symplectic basis
$\left\{X_i(\omega),Y_j(\omega): i=1,\ldots,n\right\}$ of $\mathfrak b$
and $d_i(\omega)>0$ such that
\[\omega[X_i(\omega),Y_j(\omega)]=\left\{
  \begin{array}{ll}
  \delta_{ij}d_i(\omega), & \hbox{ when } X\neq Y;\\
    0, & \hbox{ otherwise}.
  \end{array}
\right.\]
Let $\zeta_\omega=\text{span}\{X_i(\omega): i=1,\ldots,n\}$ and
$\eta_\omega=\text{span}\{Y_j(\omega): j=1,\ldots,n\}.$ Then we can write
$\mathfrak b=\zeta_\omega\oplus\eta_\omega$ and each $(X, Y, Z)\in G$
can be represented by
\[(X, Y, Z)=\sum_{i=1}^nx_i(\omega)X_i(\omega)+\sum_{i=1}^ny_i(\omega)Y_i(\omega)+\sum_{i=1}^kt_i(\omega)Z_i(\omega).\]
Hence a typical element of $G$ can be written as $(x,y,t),$ where $x,y\in\mathbb R^n$
and $t\in\mathbb R^k.$ For more details, we refer to \cite{CG,M,Ki}.

\smallskip

Next, we briefly describe the irreducible representation of the M\'{e}tivier
group $G$ which can be parameterized by $\Lambda.$ That is, each $\omega\in\Lambda$
induces an irreducible unitary representation $\pi_{\omega}$ of $G$ by
\[\left(\pi_{\omega}(x,y,t)\phi\right)(\xi)=e^{i\sum_{j=1}^k\omega_jt_j
+i\sum_{j=1}^n d_j(\omega)\left(x_j\xi_j+\frac{1}{2}x_jy_j\right)}\phi(\xi+y),\]
whenever $\phi\in L^2(\eta_\omega).$ For the sake of simplicity, we write
$v=(x,y).$ Then the group Fourier transform of $f\in L^1(G)$ can be defined by
\[\hat f(\omega)=\int_{\mathfrak z}\int_{\mathfrak b}f(v,t)\pi_{\omega}(v,t)dvdt,\]
where $\omega\in\Lambda.$ Now, we define the Fourier inversion of $f$ in the
$t$ variable by
\[f^\omega(v)=\int_{\mathfrak z}e^{i\sum_{j=1}^k\omega_jt_j}f(v,t)dt.\]
Then for the suitable functions $f$ and $g$ on $\mathfrak b,$ we can
define the $\omega$-twisted convolution of $f$ and $g$ by
\[f\ast_\omega g(v)=\int_b f(v-v')g(v')e^{\frac{i}{2}\omega([v,v'])}dv'.\]
Here it is immediate that $(f\ast g)^\omega=f^\omega\ast_\omega g^\omega.$
Let $p(\omega)=\Pi_{i=1}^nd_i(\omega)$ be the symmetric
function of degree $n$ corresponding to $B_\omega.$ For $f\in L^1\cap L^2(G),$
the operator $\hat f(\omega)$ is a Hilbert-Schmidt operator that satisfies
\[p(\omega)\|\hat f(\omega)\|_{HS}^2=(2\pi)^{n}\int_{\mathfrak b}|f^\omega(v)|^2dv.\]
Denote $\pi_{\omega}(v)=\pi_{\omega}(v,o).$ Then the Fourier inversion
$f^\omega$ can be determined by the formula
\[f^\omega(v)=(2\pi)^{-n}p(\omega)~\text{tr}(\pi_{\omega}(v)^\ast\hat f(\omega)).\]

\section{Uniqueness results on step two nilpotent group}\label{section6}
For $\omega\in\Lambda$ and $h\in L^1\cap L^2(\mathfrak b),$ the Weyl transform
$W_{\omega}(h)$ is defined by
\begin{equation}\label{exp22}
W_{\omega}(h)=\int_{\mathfrak b}h(v)\pi_{\omega}(v)dv.
\end{equation}
The Weyl transform $W_{\omega}(h)$ is a Hilbert-Schmidt operator on $L^2(\eta_\omega)$
that satisfies the following Plancherel formula, (see \cite{PT}).

\begin{theorem}\label{th3}
For $h\in L^2(\mathfrak b),$ the following equality holds:
\[p(\omega)\|W_{\omega}(h)\|_{HS}^2=(2\pi)^{n}\int_{\mathfrak b}|h(v)|^2dv.\]
\end{theorem}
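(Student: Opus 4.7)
The plan is to deduce this Weyl-type Plancherel identity directly from the ``partial'' Plancherel formula already recorded at the end of Section \ref{section5}, namely
\[
p(\omega)\|\hat f(\omega)\|_{HS}^2=(2\pi)^{n}\int_{\mathfrak b}|f^\omega(v)|^2\,dv\qquad (f\in L^1\cap L^2(G)).
\]
The bridge between the two formulas is the factorization $\pi_{\omega}(v,t)=e^{i\langle\omega,t\rangle}\pi_{\omega}(v)$, which is immediate from the explicit action $(\pi_\omega(x,y,t)\phi)(\xi)=e^{i\langle\omega,t\rangle}e^{i\sum d_j(\omega)(x_j\xi_j+\frac12 x_jy_j)}\phi(\xi+y)$. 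Fubini then gives
\[
\hat f(\omega)=\int_{\mathfrak b}\pi_{\omega}(v)\Bigl(\int_{\mathfrak z}f(v,t)\,e^{i\langle\omega,t\rangle}dt\Bigr)dv=\int_{\mathfrak b}f^\omega(v)\pi_{\omega}(v)\,dv=W_{\omega}(f^\omega).
\]
So the preliminary identity is exactly the claim, but only for those $h\in L^2(\mathfrak b)$ of the form $h=f^\omega$ for some $f\in L^1\cap L^2(G)$.

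Next, I would show that every $h\in L^1\cap L^2(\mathfrak b)$ arises this way (up to a scalar). Fix $\omega\in\Lambda$ and pick any $\phi\in L^1\cap L^2(\mathfrak z)$ whose Euclidean Fourier transform $\hat\phi$ is nowhere vanishing on $\Lambda$ (e.g. a Gaussian, so that $\hat\phi(\omega)\ne0$). Define $f(v,t)=h(v)\phi(t)$; then $f\in L^1\cap L^2(G)$ with $f^\omega(v)=\hat\phi(\omega)\,h(v)$, and by Step 1,
\[
\hat f(\omega)=\hat\phi(\omega)\,W_{\omega}(h).
\]
Substituting into the preliminary Plancherel identity yields
\[
p(\omega)\,|\hat\phi(\omega)|^2\,\|W_{\omega}(h)\|_{HS}^2=(2\pi)^n|\hat\phi(\omega)|^2\int_{\mathfrak b}|h(v)|^2dv,
\]
and cancelling the nonzero factor $|\hat\phi(\omega)|^2$ gives the theorem for all $h\in L^1\cap L^2(\mathfrak b)$.

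Finally, I would extend the identity to arbitrary $h\in L^2(\mathfrak b)$. The equality just proved says that, up to the positive constant $\bigl((2\pi)^n/p(\omega)\bigr)^{1/2}$, the map $h\mapsto W_{\omega}(h)$ is an isometry from the dense subspace $L^1\cap L^2(\mathfrak b)$ into the Hilbert space of Hilbert--Schmidt operators on $L^2(\eta_\omega)$. It therefore extends uniquely by continuity to all of $L^2(\mathfrak b)$, and the identity persists in the limit. There is no real obstacle here; the only thing to be careful about is choosing $\phi$ so that $\hat\phi(\omega)\ne0$ (trivially arranged by a Gaussian) and then invoking Fubini and dominated convergence to justify the density step.
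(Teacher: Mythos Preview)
The paper does not actually prove this theorem; it is stated with the parenthetical ``(see \cite{PT})'' and no argument is given. So there is no in-paper proof to compare against.

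Your derivation is correct as written: the factorization $\pi_\omega(v,t)=e^{i\langle\omega,t\rangle}\pi_\omega(v)$ gives $\hat f(\omega)=W_\omega(f^\omega)$, the tensor construction $f(v,t)=h(v)\phi(t)$ with a Gaussian $\phi$ realizes every $h\in L^1\cap L^2(\mathfrak b)$ as a nonzero scalar multiple of some $f^\omega$, and the density extension to $L^2(\mathfrak b)$ is routine. One caveat worth flagging: the ``preliminary'' identity you invoke from Section~\ref{section5} is, once you substitute $\hat f(\omega)=W_\omega(f^\omega)$, literally the statement of Theorem~\ref{th3} restricted to functions of the form $f^\omega$. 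So your argument is not an independent proof of the Plancherel formula but rather a (correct) observation that the version stated in the preliminaries already implies the version stated as Theorem~\ref{th3}, via a density step. In most treatments (including \cite{PT}) the logical order runs the other way---the Weyl Plancherel is proved first and the group-level identity is deduced from it---so be aware that within the paper you are leaning on a fact that is itself asserted without proof.
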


\begin{proposition}\label{prop22}
For $h\in L^1\cap L^2(\mathfrak b),$ we have the identities:\\
$(i)~W_{\omega}(h^\ast)=W_{\omega}(h)^\ast,$ where $h^\ast(v)=\overline{h(v^{-1})},$\\
$(ii)~W_{\omega}(h^\ast\ast_\omega h)=W_{\omega}(h)^\ast W_{\omega}(h).$
\end{proposition}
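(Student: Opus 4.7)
This proposition is the direct M\'{e}tivier-group analogue of Proposition \ref{prop2}, and my plan is simply to port that argument: I would work weakly against vectors $\phi, \psi \in L^2(\eta_\omega),$ unwind the definitions, and rearrange. The two parts are essentially independent and I would handle them in order.

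For part $(i),$ I would begin from
\[\langle W_\omega(h^\ast)\phi, \psi\rangle = \int_{\mathfrak{b}} \overline{h(v^{-1})} \langle \pi_\omega(v)\phi, \psi\rangle \, dv,\]
and perform the change of variable $v \mapsto v^{-1}.$ Since $(V, 0)^{-1} = (-V, 0)$ in $G,$ this is simply $v \mapsto -v$ on $\mathfrak{b},$ so the Lebesgue measure $dv$ is preserved; unitarity of $\pi_\omega$ then gives $\pi_\omega(v^{-1}) = \pi_\omega(v)^\ast.$ Rearranging the inner product yields $\langle \phi, W_\omega(h)\psi\rangle = \langle W_\omega(h)^\ast \phi, \psi\rangle,$ which is the claim. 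This reproduces the two-line computation in the proof of Proposition \ref{prop2}$(i).$

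For part $(ii),$ my plan is first to establish the general multiplicativity $W_\omega(f \ast_\omega g) = W_\omega(f) W_\omega(g)$ for arbitrary $f, g \in L^1 \cap L^2(\mathfrak{b}),$ and then specialize to $f = h^\ast,$ $g = h,$ invoking part $(i).$ The key ingredient is the composition identity
\[\pi_\omega(v)\pi_\omega(v') = e^{\frac{i}{2}\omega([v, v'])}\pi_\omega(v + v'),\]
which follows from $\pi_\omega$ being a representation of $G$: by Baker--Campbell--Hausdorff one has $(v, 0)(v', 0) = (v + v', \tfrac{1}{2}[v, v'])$ in $G,$ and the central phase $e^{\frac{i}{2}\omega([v, v'])}$ is produced by the $t$-dependent factor in the explicit formula for $\pi_\omega(v + v', \tfrac{1}{2}[v, v']).$ Given this, I would compute
\[\langle W_\omega(f) W_\omega(g)\phi, \psi\rangle = \iint_{\mathfrak{b}\times\mathfrak{b}} f(v) g(v') e^{\frac{i}{2}\omega([v, v'])} \langle \pi_\omega(v + v')\phi, \psi\rangle \, dv\, dv',\]
substitute $u = v + v',$ use $\omega([u - v', v']) = \omega([u, v'])$ (because $[v', v'] = 0$), and recognize the inner $v'$-integral as $(f \ast_\omega g)(u)$ from the definition given in Section \ref{section5}; this collapses to $\langle W_\omega(f \ast_\omega g)\phi, \psi\rangle,$ and $(ii)$ follows by taking $f = h^\ast,$ $g = h.$

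The only genuine obstacle is verifying the composition identity with the correct sign, a routine bookkeeping exercise using $\omega[X_i(\omega), Y_j(\omega)] = \delta_{ij} d_i(\omega)$ together with the explicit action of $\pi_\omega(x, y, t)$ on $L^2(\eta_\omega);$ once this is settled, the remainder is purely formal and mirrors the proof of Proposition \ref{prop2} step by step.
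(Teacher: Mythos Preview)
Your proposal is correct and follows essentially the same route as the paper: both arguments work weakly against $\phi,\psi\in L^2(\eta_\omega)$, use the change of variable $v\mapsto -v$ and unitarity for part $(i)$, and use the composition law $\pi_\omega(v)\pi_\omega(v')=e^{\frac{i}{2}\omega([v,v'])}\pi_\omega(v+v')$ followed by a shift in the integration variable for part $(ii)$. The only cosmetic difference is that you prove the general multiplicativity $W_\omega(f\ast_\omega g)=W_\omega(f)W_\omega(g)$ and then specialize, whereas the paper computes directly with $h^\ast$ and $h$.
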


\begin{proof}
$(i)$ For $\phi,\psi\in L^2(\eta_\omega),$ we can write
\begin{eqnarray*}
\left\langle W_{\omega}(h^\ast)\phi,\psi\right\rangle &=&\int_{\mathfrak b}
h^\ast(v)\left\langle\pi_{\omega}(v)\phi,\psi\right\rangle dv\\
&=&\int_{\mathfrak b}\left\langle \phi,h\left(v^{-1}\right)\pi_{\omega}\left(v^{-1}\right)g\right\rangle dv\\
&=& \left\langle \phi,W_{\omega}(h)\psi\right\rangle=\left\langle W_{\omega}(h)^\ast\phi,\psi\right\rangle.
\end{eqnarray*}

$(ii)$ Further, we have
\begin{eqnarray*}
\left\langle W_{\omega}(h)^\ast W_{\omega}(h)\phi,\psi\right\rangle &=&\int_{\mathfrak b}\int_{\mathfrak b}
h^\ast(v)h(v')\left\langle\pi_{\omega}(v)\pi_{\omega}(v')\phi,\psi\right\rangle dvdv'\\
&=&\int_{\mathfrak b}\int_{\mathfrak b}h^\ast(v-v')h(v')e^{\frac{i}{2}\omega([v,v'])}\left\langle\pi_{\omega}(v)\phi,\psi\right\rangle dvdv'\\
&=&\int_{\mathfrak b}(h^\ast\ast_\omega h)(v)\left\langle\pi_{\omega}(v)\phi,\psi\right\rangle dv\\
&=&\left\langle W_{\omega}(h^\ast\ast_\omega h)\phi,\psi\right\rangle.
\end{eqnarray*}
\end{proof}

Now, we state our main result of this section. Let $\text{A}$ and $\text{B}$ are
Lebesgue measurable subsets of $\zeta_\omega$ and $\eta_\omega$ respectively such
that $0<m(\text{A}),m(\text{B})<\infty,$ where $m$ denotes the Lebesgue  measure.

\begin{theorem}\label{th4}
Suppose $f\in L^1(G)$ is supported on the set $\Sigma\oplus\mathfrak z,$ where
$\Sigma$ is a subset of $\mathfrak b.$
\smallskip

$(i)$ If $~\Sigma$ has finite Lebesgue measure and $\hat{f}(\omega)$
is a rank one operator for all $\omega\in\Lambda,$ then $f=0.$
\smallskip

$(ii)$ If $~\Sigma=A\times B$ and $\hat{f}(\omega)$ has finite rank
for all $\omega\in\Lambda,$ then $f=0.$
\end{theorem}

In order to prove Theorem \ref{th4}, we need the following crucial results.
Let $\phi,\psi\in L^2(\eta_\omega).$ Then the Fourier-Wigner transform of $\phi$
and $\psi$ is a function on $\mathfrak b$ defined by
\[T(\phi,\psi)(v)=\left\langle\pi_{\omega}(v)\phi,\psi\right\rangle.\]
As a consequence of the Schur's orthogonality relation, these functions
$T(\phi,\psi)$'s are orthogonal among themselves. For more details, we refer
to Wolf \cite{W}.

\begin{lemma}\label{lemma3}{\em\cite{W}}
Let $\phi_j,\psi_j\in L^2(\eta_\omega);~j=1,2.$ Then
\[\int_{\mathfrak b}T(\phi_1,\psi_1)(v)\overline{T(\phi_2,\psi_2)(v)}dv=
c(\omega)\left\langle \phi_1,\phi_2 \right\rangle\overline{\left\langle \psi_1,\psi_2 \right\rangle},\]
where $c(\omega)=(2\pi)^{n}~p(\omega)^{-1}.$
\end{lemma}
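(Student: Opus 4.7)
The strategy is a direct computation: plug in the explicit formula for $\pi_\omega$ given in the preliminaries, perform phase cancellation, and then collapse the $x$-integration to a delta distribution via Fourier inversion, picking up the factor $p(\omega)^{-1}$ from the rescaling of frequencies.

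First, I would note that since the integration in the statement is only over $\mathfrak b$, we may set $t=0$, so that
\[
(\pi_{\omega}(x,y,0)\phi)(\xi)=e^{i\sum_j d_j(\omega)\left(x_j\xi_j+\frac{1}{2}x_jy_j\right)}\phi(\xi+y).
\]
Thus for $l=1,2$,
\[
T(\phi_l,\psi_l)(x,y)=\int_{\eta_\omega}e^{i\sum_j d_j(\omega)\left(x_j\xi_j+\frac{1}{2}x_jy_j\right)}\phi_l(\xi+y)\overline{\psi_l(\xi)}\,d\xi.
\]

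Next, I would write the product $T(\phi_1,\psi_1)(x,y)\overline{T(\phi_2,\psi_2)(x,y)}$ as an iterated integral in the auxiliary variables $\xi,\xi'\in\eta_\omega$. The half-phase $e^{\pm\frac{i}{2}\sum d_j(\omega) x_jy_j}$ appears with opposite signs and cancels, leaving the oscillatory factor $e^{i\sum_j d_j(\omega)x_j(\xi_j-\xi'_j)}$. After assuming the $\phi_l,\psi_l$ lie in a dense class (e.g.\ Schwartz) so that Fubini applies, I would integrate in $x$ first. By the change of variable $u_j=d_j(\omega)x_j$ and Fourier inversion,
\[
\int_{\zeta_\omega}e^{i\sum_j d_j(\omega)x_j(\xi_j-\xi'_j)}\,dx=\frac{(2\pi)^n}{p(\omega)}\,\delta(\xi-\xi'),
\]
which is where the constant $c(\omega)=(2\pi)^n/p(\omega)$ arises.

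Once the $\xi'$-integration is absorbed by the delta, the remaining double integral over $(\xi,y)\in\eta_\omega\times\eta_\omega$ reads
\[
\frac{(2\pi)^n}{p(\omega)}\int_{\eta_\omega}\int_{\eta_\omega}\phi_1(\xi+y)\overline{\phi_2(\xi+y)}\overline{\psi_1(\xi)}\psi_2(\xi)\,d\xi\,dy.
\]
The substitution $\eta=\xi+y$ (with $\xi$ fixed) separates the variables and yields
\[
\frac{(2\pi)^n}{p(\omega)}\left(\int_{\eta_\omega}\phi_1(\eta)\overline{\phi_2(\eta)}\,d\eta\right)\left(\int_{\eta_\omega}\overline{\psi_1(\xi)}\psi_2(\xi)\,d\xi\right)=c(\omega)\langle\phi_1,\phi_2\rangle\overline{\langle\psi_1,\psi_2\rangle}.
\]
One then extends to general $L^2(\eta_\omega)$ functions by density, using the fact that the resulting bilinear identity is continuous in each argument (which in turn follows from the $L^2$-boundedness implicit in the Plancherel formula of Theorem~\ref{th3}).

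The only delicate point is bookkeeping the determinant factor $p(\omega)=\prod_j d_j(\omega)$ correctly when converting the oscillatory $x$-integral into a delta, and keeping track of which block of variables ($\zeta_\omega$ for $x$, $\eta_\omega$ for $y,\xi,\xi'$) each integration runs over in the splitting $\mathfrak b=\zeta_\omega\oplus\eta_\omega$. Conceptually this is nothing more than the Schur orthogonality for the irreducible representation $\pi_\omega$ made explicit in the Schrödinger model, which is precisely why the result is attributed to Wolf~\cite{W}.
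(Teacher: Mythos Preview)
Your computation is correct. The paper, however, does not supply its own proof of this lemma: it is stated with the attribution \cite{W} and the surrounding text simply says that the orthogonality is ``a consequence of the Schur's orthogonality relation'' and refers the reader to Wolf for details. So there is nothing to compare against; your argument fills in exactly what the paper leaves to the citation.

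Your approach---writing out $T(\phi_l,\psi_l)$ in the Schr\"odinger model, cancelling the common half-phase, collapsing the $x$-integral to $(2\pi)^n p(\omega)^{-1}\delta(\xi-\xi')$ via the rescaling $u_j=d_j(\omega)x_j$, and then separating the remaining $(\xi,y)$-integral---is the standard direct route and recovers the constant $c(\omega)$ correctly. The density argument at the end is the right way to pass from Schwartz data to general $L^2(\eta_\omega)$, and your remark that this is Schur orthogonality made explicit is precisely the point the paper is gesturing at when it cites Wolf.
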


We observe that these functions $T(\phi,\psi)$'s generate an orthonormal basis for
$L^2(\mathfrak b).$ Let $\{\varphi_j:j\in\mathbb N\}$ be an orthonormal basis for
$L^2(\eta_\omega).$
\begin{proposition}\label{prop6}
The set $\{T(\varphi_i,\varphi_j):i,j\in\mathbb N\}$ is an orthonormal basis
for $L^2(\mathfrak b).$
\end{proposition}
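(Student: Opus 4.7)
The plan is to derive Proposition \ref{prop6} as a short consequence of the two ingredients just established: Lemma \ref{lemma3} (orthogonality of Fourier--Wigner transforms) and Theorem \ref{th3} (Plancherel formula for the Weyl transform $W_\omega$). Orthogonality of the system is essentially free, and the substantive content is completeness in $L^2(\mathfrak b)$.

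First I would dispose of orthogonality. Applying Lemma \ref{lemma3} to $\phi_1=\varphi_i,\psi_1=\varphi_j,\phi_2=\varphi_k,\psi_2=\varphi_l$ and using the fact that $\{\varphi_j\}$ is orthonormal in $L^2(\eta_\omega)$ yields
\[\langle T(\varphi_i,\varphi_j),T(\varphi_k,\varphi_l)\rangle_{L^2(\mathfrak b)}=c(\omega)\,\delta_{ik}\delta_{jl},\]
so the family $\{T(\varphi_i,\varphi_j)\}_{i,j\in\mathbb N}$ is an orthogonal system of constant norm $\sqrt{c(\omega)}$ (and becomes truly orthonormal after dividing by $\sqrt{c(\omega)}$).

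The key step is completeness. Let $h\in L^2(\mathfrak b)$ and assume $\langle h,T(\varphi_i,\varphi_j)\rangle_{L^2(\mathfrak b)}=0$ for all $i,j\in\mathbb N$. Passing to conjugates, this is equivalent to
\[0=\int_{\mathfrak b}\overline{h(v)}\,T(\varphi_i,\varphi_j)(v)\,dv=\int_{\mathfrak b}\overline{h(v)}\,\langle\pi_\omega(v)\varphi_i,\varphi_j\rangle\,dv=\langle W_\omega(\overline h)\varphi_i,\varphi_j\rangle,\]
the last equality being simply the definition \eqref{exp22} of the Weyl transform. Since $\{\varphi_j\}$ is an orthonormal basis of $L^2(\eta_\omega)$, the vanishing of every matrix entry forces $W_\omega(\overline h)=0$ as an operator on $L^2(\eta_\omega)$. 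Invoking the Plancherel formula in Theorem \ref{th3},
\[\frac{(2\pi)^{n}}{p(\omega)}\int_{\mathfrak b}|h(v)|^2\,dv=\|W_\omega(\overline h)\|_{HS}^2=0,\]
so $h=0$ almost everywhere. This shows that the orthogonal complement of the span of $\{T(\varphi_i,\varphi_j)\}$ is trivial, and hence the system is a (orthogonal, resp.\ orthonormal after rescaling) basis of $L^2(\mathfrak b)$.

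There is no real obstacle here; the only mild care required is bookkeeping the conjugation correctly when converting the orthogonality condition in $L^2(\mathfrak b)$ into a statement about matrix entries of $W_\omega(\overline h)$, and acknowledging that ``orthonormal'' in the statement is understood up to the universal normalizing constant $c(\omega)=(2\pi)^n p(\omega)^{-1}$ supplied by Lemma \ref{lemma3}.
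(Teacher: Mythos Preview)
Your proof is correct and follows essentially the same route as the paper: orthogonality via Lemma~\ref{lemma3}, and completeness by showing that $\langle h,T(\varphi_i,\varphi_j)\rangle=0$ for all $i,j$ forces $W_\omega(\overline h)=0$, whence $h=0$ by the Plancherel formula of Theorem~\ref{th3}. Your explicit remark that ``orthonormal'' here is up to the constant $c(\omega)$ is a point the paper leaves implicit.
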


\begin{proof}
In view of Lemma \ref{lemma3}, it is clear that $\{T(\varphi_i,\varphi_j):i,j\in\mathbb N\}$
is an orthonormal set. Now, it only remains to verify the completeness. For this, let
$f\in L^2(\mathfrak b)$ be such that $\left\langle f,T(\varphi_i,\varphi_j)\right\rangle=0,$
whenever $i,j\in\mathbb N.$ Then
\begin{eqnarray}\label{eq012}
\left\langle W_{\omega}(\bar f)\phi_i,\phi_j\right\rangle &=& \int_{\mathfrak b}
\bar f(v)\left\langle\pi_{\omega}(v)\phi_i,\phi_j\right\rangle dv\\
&=& \left\langle f,T(\phi_i,\phi_j)\right\rangle=0.\nonumber
\end{eqnarray}
Hence, we infer that $W_{\omega}(\bar f)=0.$ Thus, by the Plancherel
Theorem \ref{th3}, we conclude that $f=0.$
\end{proof}

\begin{proposition}\label{prop21}
Let $F=T(\phi,\psi),$ where $\phi,\psi\in L^2(\eta_\omega).$ If the set
$\{v\in\mathfrak b: F(v)\neq0\}$ has a finite Lebesgue measure, then $F$
has to vanish identically.
\end{proposition}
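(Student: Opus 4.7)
The plan is to reduce Proposition \ref{prop21} to the already-stated Theorem \ref{thm001} (the Jaming/Janssen result) by an explicit linear change of variables in the $\mathfrak{b}$-variable that converts the M\'etivier Schr\"odinger-type representation $\pi_\omega$ into the standard Schr\"odinger representation $\pi$ at $\lambda=1$.

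First, I would use the explicit formula for $\pi_\omega$ to write out
\[
F(x,y) = T(\phi,\psi)(x,y) = \int_{\eta_\omega} e^{i\sum_{j=1}^n d_j(\omega)\bigl(x_j\xi_j + \tfrac{1}{2}x_j y_j\bigr)} \phi(\xi+y)\,\overline{\psi(\xi)}\,d\xi.
\]
Introducing the diagonal matrix $D = \mathrm{diag}(d_1(\omega),\ldots,d_n(\omega))$ (which is invertible since $\omega\in\Lambda$, with $|\det D| = p(\omega)>0$) and the substitution $\tilde x = Dx$, the exponent becomes $\tilde x\cdot\xi + \tfrac{1}{2}\tilde x\cdot y$, i.e.\ exactly the phase of the standard Schr\"odinger representation $\pi(\tilde x+iy)$ on $L^2(\mathbb{R}^n)$ (after identifying $\eta_\omega\cong\mathbb{R}^n$ via the chosen basis). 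Hence
\[
F(x,y) = X(Dx+iy), \qquad X(z):=\langle \pi(z)\phi,\psi\rangle = T_{\mathrm{Heis}}(\phi,\psi)(z).
\]

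Next, I would transfer the finite-measure hypothesis through this change of variables. If $\Omega=\{v\in\mathfrak b : F(v)\neq 0\}$ has finite Lebesgue measure, then the linear bijection $\Phi(x,y)=(Dx,y)$ on $\mathbb{R}^{2n}$, having Jacobian $p(\omega)\neq 0$, maps $\Omega$ to a set of Lebesgue measure $p(\omega)\,|\Omega|<\infty$. Since $X(Dx+iy)\neq 0$ iff $(x,y)\in\Omega$, the support of $X$ equals $\Phi(\Omega)$ (up to a null set), which has finite Lebesgue measure in $\mathbb{C}^n$.

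Now Theorem \ref{thm001} applies directly to $X=T_{\mathrm{Heis}}(\phi,\psi)$ and yields $X\equiv 0$, whence $F=X\circ\Phi\equiv 0$. There is no real obstacle beyond verifying carefully that the substitution $\tilde x = Dx$ converts the M\'etivier phase $\sum d_j(\omega)(x_j\xi_j+\tfrac{1}{2}x_j y_j)$ into the Heisenberg phase, and that the linearity and nonvanishing of $p(\omega)$ guarantee the preservation of finite Lebesgue measure. This reduction is in the spirit of the proof of Proposition \ref{prop1}, where a tensor factorization reduced the Heisenberg motion setting to the Heisenberg setting; here the role of the factorization is played by the coordinate rescaling dictated by the eigenvalues of $B_\omega$.
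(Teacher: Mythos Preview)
Your proposal is correct. The paper does not actually supply a proof of Proposition~\ref{prop21}; it simply remarks that the argument is ``almost similar to Theorem~\ref{thm001}'' and omits it. Your diagonal change of variables $\tilde x = Dx$ with $D=\mathrm{diag}(d_1(\omega),\ldots,d_n(\omega))$ makes this remark precise: it converts the M\'etivier Fourier--Wigner transform into the standard Heisenberg one, and since the map $(x,y)\mapsto(Dx,y)$ is an invertible linear bijection with Jacobian $p(\omega)\neq 0$, the finite-measure hypothesis transfers and Theorem~\ref{thm001} applies verbatim. Whereas the paper's phrasing suggests rerunning the Jaming/Janssen argument in the new coordinates, you instead reduce directly to the already-stated theorem; the two routes are logically equivalent, but yours is the cleaner way to execute what the authors left to the reader.
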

\begin{proof}
We would like to mention that the proof of Proposition \ref{prop21} is almost
similar to Theorem \ref{thm001} and hence we omit it here.
\end{proof}

Let $\mathcal E$ and $\mathcal F$ be finite measure subset of $\zeta_\omega$ and $\eta_\omega$
respectively such that $0<m(\mathcal E),m(\mathcal F)<\infty$ and $\Sigma=\mathcal E\times\mathcal F.$
\begin{lemma}\label{lemma21}
For $h_j\in L^2(\eta_\omega),$ write $K_y(\xi)=\sum\limits_{j=1}^N h_j(\xi+y)\overline{h_j(\xi)},$
where $y\in\eta_\omega.$ If $K_y(\xi)=0$ for all $y\in\eta_\omega\smallsetminus\mathcal F$ and for
almost all $\xi\in\eta_\omega,$ then each of $h_j$ is finitely supported.
\end{lemma}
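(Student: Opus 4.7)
The approach mirrors that of Lemma \ref{lemma2}, but is cleaner here because the same family $\{h_j\}$ appears on both sides of the sum defining $K_y$. The plan is to reinterpret $K_y(\xi)$ as a Hermitian inner product in $\mathbb{C}^N$ and then run a pigeonhole argument on widely separated points of the essential support of the vector-valued function $h=(h_1,\ldots,h_N)$.

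First I would collect a single null set $A\subset\eta_\omega$ outside of which every $h_j$ takes finite values and outside of which $K_y(\xi)=0$ holds for all $|y|\ge r$; this parallels the choice $A=A_1\cup A_2$ used in the proof of Lemma \ref{lemma2}. With $h(\xi)=(h_1(\xi),\ldots,h_N(\xi))\in\mathbb{C}^N$, the hypothesis then reads
\[
\langle h(\xi),h(\xi')\rangle_{\mathbb{C}^N}=K_{\xi-\xi'}(\xi')=0
\quad\text{whenever } \xi,\xi'\in\eta_\omega\setminus A \text{ and } |\xi-\xi'|\ge r.
\]

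Next I would select inductively a sequence $\xi_1,\xi_2,\ldots$ in $\eta_\omega\setminus A$ such that $h(\xi_l)\ne 0$ and $\xi_l\notin\bigcup_{i<l}B_r(\xi_i)$. If this selection could be continued for $N+1$ steps, then $\{h(\xi_l):1\le l\le N+1\}$ would be a collection of $N+1$ nonzero, pairwise orthogonal vectors in $\mathbb{C}^N$, which is impossible. Hence the procedure must terminate at some step $M\le N$, and at that moment every $\xi\in\eta_\omega\setminus A$ with $h(\xi)\ne 0$ lies in $\bigcup_{l=1}^{M}B_r(\xi_l)$. Since $A$ is null, the essential support of each $h_j$ is contained in this bounded union, giving compact support.

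The one point that merits care is the measure-theoretic step of producing a single null set $A$ that works uniformly in $y$. If one reads the hypothesis as ``for a.e.\ $\xi$ and every $|y|\ge r$'' (the reading adopted implicitly in the proof of Lemma \ref{lemma2}), this is immediate. Otherwise, joint measurability of $(y,\xi)\mapsto K_y(\xi)$ together with Fubini converts the family $\{N_y\}_{|y|\ge r}$ of null sets into a single such set after a routine argument. I expect this measure-theoretic housekeeping to be the only mildly delicate point; the pigeonhole on $N+1$ orthogonal nonzero vectors in $\mathbb{C}^N$, which is the heart of the argument, is entirely straightforward.
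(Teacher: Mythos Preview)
Your proposal is correct and follows essentially the same route as the paper's own proof: both pass to the vector-valued map $h=(h_1,\ldots,h_N)$, read $K_y(\xi)$ as $\langle h(\xi+y),h(\xi)\rangle_{\mathbb{C}^N}$, and run an inductive selection of far-apart points to force a contradiction via too many nonzero orthogonal vectors in $\mathbb{C}^N$. Your phrasing of the pigeonhole ($N+1$ pairwise orthogonal nonzero vectors in $\mathbb{C}^N$) is in fact slightly crisper than the paper's, and your explicit remark about consolidating the null sets into a single $A$ addresses a point the paper glosses over.
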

\begin{proof}
Since $h_j\in L^2(\eta_\omega),$  there exists a set $A$ of Lebesgue measure
zero such that $|h_j|$ is finite on $\eta_\omega\smallsetminus A.$ Define a
function $\chi$ on $\eta_\omega\smallsetminus A$ by\[\chi=\left(h_1,\ldots,h_N\right).\]
If $h_j$ is non-vanishing on $\eta_\omega\smallsetminus A$ for some $j,$ then
we can choose $\xi_1\in\eta_\omega\smallsetminus A$ such that $\chi(\xi_1)\neq0.$
Let $B(\xi_1)$ be the set $\xi_1+(\mathcal F\cup\{0\}).$
If $\chi$ vanishes on $\eta_\omega\smallsetminus B(\xi_1)\cup A,$
then the result follows. Otherwise, by induction, we can choose
$\xi_j\in\eta_\omega\smallsetminus\bigcup\limits_{i=1}^{j-1}\left(B(\xi_i)\cup A\right)$
such that $\chi(\xi_j)\neq0,$ whenever $j\leq N,$ where $B(\xi_i)=\xi_i+(\mathcal F\cup\{0\}).$
Thus by the hypothesis, the set $S=\{\chi(\xi_j): j=1,2,\ldots,N\}$ is an orthogonal set in $\mathbb C^N.$
Now, if $\xi\in\eta_\omega\smallsetminus\bigcup\limits_{j=1}^{N}\left(B(\xi_j)\cup A\right),$
then $\chi(\xi)\in S^{\perp},$ and hence $\chi(\xi)=0.$
\end{proof}

\begin{proposition}\label{prop3}
Let $h\in L^1\cap L^2(\mathfrak b)$ be supported on $\Sigma$ in $\mathfrak b.$
If $~W_{\omega}(h)$ is a finite rank operator, then $h=0.$
\end{proposition}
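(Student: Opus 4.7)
The plan is to mimic the argument of Theorem \ref{th1}, with $W_\sigma$ replaced by $W_\omega$ and the Fourier--Wigner transform $V$ replaced by $T$. First I set $\bar\tau = h^\ast \ast_\omega h$. By Proposition \ref{prop22}(ii), $W_\omega(\bar\tau) = W_\omega(h)^\ast W_\omega(h)$ is a positive, finite-rank, self-adjoint operator on $L^2(\eta_\omega)$, so the spectral theorem produces $a_j \geq 0$ and an orthonormal set $\{f_j\}_{j=1}^N$ in $L^2(\eta_\omega)$ with $W_\omega(\bar\tau)f = \sum_{j=1}^N a_j \langle f, f_j\rangle f_j$.

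Next, for $f, g \in L^2(\eta_\omega)$, I evaluate $\langle W_\omega(\bar\tau)f, g\rangle$ in two ways. Directly from the definition this equals $\int_{\mathfrak b} \bar\tau(v)\, T(f,g)(v)\, dv$, while the spectral formula combined with Lemma \ref{lemma3} gives $c(\omega)^{-1}\sum_j a_j \int_{\mathfrak b} T(f,g)(v)\,\overline{T(f_j,f_j)(v)}\, dv$. Since $\{T(\varphi_i,\varphi_j)\}_{i,j\in\mathbb N}$ is an orthonormal basis of $L^2(\mathfrak b)$ (Proposition \ref{prop6}) and $f, g$ range over all of $L^2(\eta_\omega)$, comparing the two expressions forces
\[\tau(v) \;=\; \sum_{j=1}^N \langle \pi_\omega(v)h_j, h_j\rangle, \qquad h_j := \sqrt{c(\omega)^{-1} a_j}\, f_j \in L^2(\eta_\omega).\]

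Because $h$ is supported in the compact set $\Sigma$, the twisted convolution $\bar\tau$ is supported in $\Sigma - \Sigma$, and hence so is $\tau$. Writing $v = (x,y)$ and using the explicit formula for $\pi_\omega$, I obtain
\[\tau(x,y) \;=\; e^{\frac{i}{2}\sum_l d_l(\omega)x_ly_l}\int_{\eta_\omega} e^{i\sum_l d_l(\omega) x_l \xi_l}\, K_y(\xi)\, d\xi, \qquad K_y(\xi) = \sum_{j=1}^N h_j(\xi+y)\overline{h_j(\xi)}.\]
After the diagonal rescaling $\xi_l \mapsto \xi_l/d_l(\omega)$ (each $d_l(\omega) > 0$), the inner integral becomes a genuine Fourier transform in $x$. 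Compact support of $\tau$ gives $\tau(\cdot, y) = 0$ for all sufficiently large $|y|$, so Fourier injectivity forces $K_y \equiv 0$ for those $y$. Lemma \ref{lemma21} then forces each $h_j$ to be compactly supported, so $K_y$ is compactly supported for \emph{every} $y$. For each fixed $y$, $\tau(\cdot, y)$ is then the Fourier transform of a compactly supported function, hence real analytic in $x$; together with its compact support in $x$ this yields $\tau(\cdot, y) \equiv 0$. Therefore $\tau = 0$, whence $W_\omega(h)^\ast W_\omega(h) = 0$, so $W_\omega(h) = 0$, and Theorem \ref{th3} gives $h = 0$.

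The main technical step is the identification $\tau = \sum_j T(h_j, h_j)$, which depends on Proposition \ref{prop6} to convert the equality of two ``pair against $T(f,g)$'' functionals into a pointwise identity. The second essential ingredient is the twisted analogue Lemma \ref{lemma21}, needed to promote $K_y \equiv 0$ for $|y| \gg 1$ to compact support of every individual $h_j$; everything else is a direct transcription of the argument in Theorem \ref{th1} with the $U(n)$-factor suppressed.
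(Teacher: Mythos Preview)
Your proof is correct and follows essentially the same route as the paper's own argument: form $\bar\tau=h^\ast\ast_\omega h$, apply the spectral theorem to the positive finite-rank operator $W_\omega(\bar\tau)$, identify $\tau=\sum_j T(h_j,h_j)$ via Lemma~\ref{lemma3} and Proposition~\ref{prop6}, and then use Lemma~\ref{lemma21} together with the Fourier-analytic argument to conclude $\tau=0$ and hence $h=0$. Your write-up is in fact a bit more explicit in places (the support $\Sigma-\Sigma$, the rescaling making the integral a genuine Fourier transform, the real-analyticity step), but the strategy is identical.
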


\begin{proof}
Let $\bar\tau=h^\ast\ast_\omega h,$ where $h^\ast(v)=\overline{h\left(v^{-1}\right)}.$
Then $W_\omega(\bar\tau)=W_\omega(h)^\ast W_\omega(h)$ is a positive and finite rank
operator on $L^2(\eta_\omega).$ By the spectral theorem,  there exist an orthonormal
set $\{\phi_j\in L^2(\eta_\omega): j=1,\ldots,N\}$ and scalars $a_j\geq0$ such that
\[W_{\omega}(\bar\tau)\phi=\sum_{j=1}^Na_j\left\langle \phi,\phi_j\right\rangle \phi_j,\]
whenever $\phi\in L^2(\eta_\omega).$ Now, for $\psi\in L^2(\eta_\omega),$ we have
\begin{eqnarray}\label{exp24}
\left\langle W_{\omega}(\bar\tau)\phi,\psi\right\rangle &=&\sum_{j=1}^Na_j\left\langle\phi,\phi_j\right\rangle\left\langle\phi_j,\psi\right\rangle\nonumber\\
&=& c(\omega)^{-1}\sum_{j=1}^Na_j\int_{\mathfrak b}T(\phi,\psi)(v)\overline{T(\phi_j,\phi_j)(v)}dv.
\end{eqnarray}
Further, by definition of $W_{\omega}(\bar\tau),$ we have
\begin{equation}\label{exp25}
\left\langle W_{\omega}(\bar\tau)\phi,\psi\right\rangle=\int_{\mathfrak b}\bar\tau(v)T(\phi,\psi)(v)dv.
\end{equation}
Hence, by comparing (\ref{exp24}) with (\ref{exp25}) in view of Proposition \ref{prop6},
it follows that
\begin{equation}\label{exp35}
\tau=\sum\limits_{j=1}^N T(h_j,h_j),
\end{equation}
where $h_j=c(\omega)^{-\frac{1}{2}}\sqrt{a_j}~\phi_j\in L^2(\eta_\omega).$  Now, for
$v=(x,y),$ write $\tau_y(x)=\tau(x,y).$ Then Equation (\ref{exp35})
becomes
\begin{equation}\label{exp27}
\tau_y(x)=\int_{\eta_\omega}e^{i\sum\limits_{j=1}^nd_j(\omega)
(x_j\xi_j+\frac{1}{2}x_jy_j)}K_y(\xi)d\xi.
\end{equation}
Since $\bar\tau$ is supported on $\mathcal E\times\mathcal F$, it follows that
$K_y(\xi)=0$ for almost every $\xi$ and for all $y\in\eta_\omega\smallsetminus\mathcal F.$
Then in view of Lemma \ref{lemma21}, it follows that each of $h_j$ is finitely supported
and hence each of $K_y$ is finitely supported. Since $\tau_y$ is is supported on $\mathcal E,$
whenever $y\in\eta_\omega,$ we infer that $\tau_y$ is zero for all
$y\in\eta_\omega.$ Now, by Plancherel Theorem \ref{th3}, we conclude that $h=0.$
\end{proof}

\begin{proof}[Proof of Theorem \ref{th4}]
(i). By a simple calculation, we get
\[\hat{f}(\omega)=\int_{\mathfrak b}f^\omega(v)\pi_{\omega}(v)dv=W_{\omega}(f^\omega).\]
Since $f^\omega$ is finitely supported  and the operator $W_{\omega}(f^\omega)$ has finite
rank, by Proposition \ref{prop3}, it follows that $f^\omega=0,$ whenever $\omega\in\Lambda.$
Hence we infer $f=0.$
\smallskip

(ii). It is enough to prove that if $W_{\omega}(f^\omega)$ has rank one, then $f^\omega=0.$ Let
$W_{\omega}(f^\omega)$ be a rank one operator. Then there exist $\phi_j\in L^2(\eta_\omega);~j=1,2$
such that \[W_{\omega}(\bar\tau)\phi=\left\langle \phi,\phi_1\right\rangle \phi_2\] for all
$\phi\in L^2(\eta_\omega),$ where $\bar\tau=f^\omega.$ Thus, for $\psi\in L^2(\eta_\omega),$
it follows that
\begin{eqnarray}\label{exp29}
\left\langle W_{\omega}(\bar\tau)\phi,\psi\right\rangle &=& \left\langle \phi,\phi_1\right\rangle \left\langle \phi_2,\psi\right\rangle\nonumber\\
&=& c(\omega)^{-1}\int_{\eta_\omega}\int_{\zeta_\omega}
T(\phi,\psi)(v)\overline{T(\phi_1,\phi_2)(v)}dv.
\end{eqnarray}
Further,  by definition, we get
\begin{eqnarray}\label{exp30}
\left\langle W_{\omega}(\bar\tau)\phi,\psi\right\rangle &=& \int_{\zeta_\omega}\int_{\eta_\omega}
\bar\tau(v)T(\phi,\psi)(v)dv.
\end{eqnarray}
Hence by comparing (\ref{exp29}) with (\ref{exp30}) in view of Lemma \ref{lemma3},
we infer that \[\tau(v)=c(\omega)^{-1}T(\phi_1,\phi_2)(v).\] Thus, from Proposition
\ref{prop21}, it follows that $\tau\equiv0.$
\end{proof}

\smallskip

\noindent{\bf Concluding remarks:}\\

\noindent  If the Fourier transform of a compactly supported
function $f$ on $\mathbb H^n\ltimes U(n)$ (or step two nilpotent
Lie groups) lands into the space of compact operators, then $f$
might be zero. However, it would be a good question to consider
the case when the spectrum of the Fourier transform of a compactly
supported function is supported on a thin uncountable set.

\bigskip

\noindent{\bf Acknowledgements:}\\
The authors would like to gratefully acknowledge the support provided by
IIT Guwahati, Government of India.

\bigskip


\end{document}